\numberwithin{equation}{section} 
\newtheorem{thm}{Theorem}[section]
\newtheorem{lem}[thm]{Lemma}
\newtheorem{pro}[thm]{Proposition}
\newcommand{\R}{\mathbb{R}}
\newcommand{\N}{\mathbb{N}}
\newcommand{\Z}{\mathbb{Z}}
\begin{document}
\baselineskip 14pt
\title{Multiplicative Diophantine approximation with  restricted denominators}

\author{Bing Li} 
\author{Ruofan Li} 
\author{Yufeng Wu*}
\thanks{*Corresponding author}

\address[]{School of Mathematics, South China University of Technology, Guangzhou,
510641, P. R. China}
\email{scbingli@scut.edu.cn}

\address[]{Department of Mathematics, Jinan University, Guangzhou, 510632, P. R. China}
\email{liruofan@jnu.edu.cn}

\address[]{School of Mathematics and Statistics\\HNP-LAMA\\Central South University\\ Changsha, 410083, P. R. China}
\email{yufengwu.wu@csu.edu.cn}

\keywords{Multiplicative Diophantine approximation, Hausdorff measure, Hausdorff dimension, restricted denominators}
\thanks{2010 {\it Mathematics Subject Classification}:11K60, 28A80}

\thanks{Declarations of interest: none}

\date{}

\begin{abstract}
Let  $\{a_n\}_{n\in\N}$, $\{b_n\}_{n\in \N}$ be two infinite subsets of positive integers and $\psi:\N\to \R_{>0}$ be a positive function. We completely determine the Hausdorff dimensions of the set of all points $(x,y)\in [0,1]^2$ which satisfy $\|a_nx\|\|b_ny\|<\psi(n)$ infinitely often, and the set of all $x\in [0,1]$ satisfying $\|a_nx\|\|b_nx\|<\psi(n)$ infinitely often. This is based on establishing general convergence results for Hausdorff measures of these two sets. 
We also obtain some results on the set of all $x\in [0,1]$ such that $\max\{\|a_nx\|, \|b_nx\|\}<\psi(n)$ infinitely often.
\end{abstract}

\maketitle

\section{Introduction}\label{S-1}

In this paper, we mainly investigate  Hausdorff  dimensions of sets which arise in multiplicative Diophantine approximation. Given a nonnegative function $\psi:\N\to \R_{\geq0}$, a point $(x_1,\dots,x_d) \in \R^d$ is called {\em multiplicatively $\psi$-well approximable} if 
\begin{equation}\label{vb2}
\|nx_1\|\cdots\|nx_d\|<\psi(n)\quad \text{ for i.m. }n\in\N.
\end{equation}
Here and throughout, ``i.m.''   means ``infinitely many'', and  $ \| \cdot  \| $ denotes the distance of a real
number to the nearest integer. Denote by  $W^{\times}_d(\psi)$ the set of multiplicatively $\psi$-well approximable points in $[0,1]^d$. Notice that it causes no loss of generality to restrict to $[0,1]^d$, since  the set of multiplicatively $\psi$-well approximable points  is invariant under translations by integer vectors. 

There have been  many works on the metric and Hausdorff theory of $W^{\times}_d(\psi)$. Lots of works are motivated by a famous conjecture of  Littlewood in 1930s, which asserts that for any $\alpha,\beta\in\R$, one has
\[\liminf_{n\to\infty}n\|n\alpha\|\|n\beta\|=0.\]
The metric theory of $W^{\times}_d(\psi)$ was established by Gallagher \cite{Gallagher62}, who proved that when $\psi$ is monotonic, the Lebesgue measure of $W^{\times}_d(\psi)$ satisfies the following zero-one dichotomy:
\begin{equation*}\label{Gallagher}
\mathcal{L}^{d}\big(W^{\times}_d(\psi)\big)=\begin{cases}
0,  & \text{ if }\sum_{n=1}^\infty\psi(n)\left(\log\psi(n)^{-1}\right)^{d-1}<\infty,\\
1, & \text{ if } \sum_{n=1}^\infty \psi(n)\left(\log\psi(n)^{-1}\right)^{d-1}=\infty.
\end{cases}
\end{equation*}
Here $\mathcal{L}^{d}$ denotes the $d$-dimensional Lebesgue measure on $\R^d$. The Hausdorff theory of $W^{\times}_d(\psi)$ was developed in \cite{BoveyDodson78}, \cite{VBSV15} and \cite{HD18T}. Again, under the assumption that $\psi$ is monotonic, the Hausdorff measure of $W^{\times}_d(\psi)$ satisfies a zero-infinity dichotomy as follows: for $s\in (d-1,d)$, 
\begin{equation*}\label{HThMal}
\mathcal{H}^{s}\big(W^{\times}_d(\psi)\big)=\begin{cases}
0,  & \text{ if } \sum_{n=1}^\infty n^{d-s}\psi(n)^{s-d+1}<\infty,\\
\infty, & \text{ if }  \sum_{n=1}^\infty  n^{d-s}\psi(n)^{s-d+1}=\infty.
\end{cases}
\end{equation*}
Throughout, for a subset $A\subset\R^d$, we let $\mathcal{H}^s(A)$ denote the $s$-dimensional Hausdorff measure of $A$, and $\dim_{\rm H}A$ the Hausdorff dimension of $A$. See \cite{Falconer86T} for definitions and further details. For more results on the classical multiplicative Diophantine approximation, one refers to \cite{BVHAVS13}, \cite{BHV20S} and \cite{VBSV15}.

There also have been many works on multiplicative Diophantine approximation restricted to manifolds.  Badziahin and Levesley \cite{DBJL07} obtained convergence results for the Lebesgue measure and Hausdorff measure of the intersection of $W_{2}^{\times}(\psi)$ with a  non-degenerate $C^{(3)}$ planar curve $\mathcal{C}$, which were conjectured in \cite{VV06D}. A complete zero-infinity dichotomy for the Hausdorff measure of $W_{2}^{\times}(\psi)\cap \mathcal{C}$ was later obtained in \cite{VBSV15}.

In this paper, we investigate  sets in the framework of multiplicative Diophantine approximation. A key feature is that we consider approximation by rational vectors/numbers with  restricted denominators from two given sequences of positive integers.

Let $\psi:\N\to \R_{>0}$ be a positive function. Let $\mathcal{A}=\{a_n\}_{n\in\N}$, $\mathcal{B}=\{b_n\}_{n\in\N}$ be two sequences of positive integers. Set
\begin{equation}\label{eqDefWABT}
W_{\mathcal{A},\mathcal{B}}^{\times}(\psi)=\left\{(x,y)\in [0,1]^2: \|a_nx\|\|b_ny\|<\psi(n) \text{ for i.m. }n\in\N\right\},
\end{equation}
\begin{equation}\label{eqDefWABD}
W_{\mathcal{A},\mathcal{B}}(\psi)=\left\{x\in [0,1]: \|a_nx\|\|b_nx\|<\psi(n) \text{ for i.m. }n\in\N\right\}.
\end{equation}
It is clear that $W_{\mathcal{A},\mathcal{B}}(\psi)$ is a homothetic copy of the intersection of $W_{\mathcal{A},\mathcal{B}}^{\times}(\psi)$ and the diagonal of $[0,1]^2$. Throughout, we use the superscript $^\times$ in a set $E^{\times}$  to  indicate that $E^{\times}$ is a subset of $[0,1]^2$, the two-fold Cartesian product of $[0,1]$. The sets $W_{\mathcal{A},\mathcal{B}}^{\times}(\psi)$ and $W_{\mathcal{A},\mathcal{B}}(\psi)$ are our main objects of study in this paper. Before we state our results, we first describe some of our motivations.

Clearly, the set $W_2^{\times}(\psi)$ in the  classical multiplicative Diophantine approximation corresponds to $W_{\mathcal{A},\mathcal{B}}^{\times}(\psi)$ with $a_n=b_n=n$ for all $n$. For general sequences of positive integers $\mathcal{A}$ and $\mathcal{B}$, the set $W_{\mathcal{A},\mathcal{B}}^{\times}(\psi)$ can be naturally understood as multiplicative Diophantine approximation by rational vectors with  restricted denominators. In the one dimensional case, Diophantine approximation with restricted denominators has been intensively studied.
One refers to Chapter 6 of \cite{Harman98} for classical results and to \cite{PVZZ22I} for more recent developments. This is one reason for us to study $W_{\mathcal{A},\mathcal{B}}^{\times}(\psi)$, which is a natural analogue of approximation with restricted denominators in the multiplicative setting. 

 Another motivation for us to consider $W_{\mathcal{A},\mathcal{B}}^{\times}(\psi)$ with general sequences $\mathcal{A}$ and $\mathcal{B}$ comes from multiplicative Diophantine approximation in some dynamical settings. Recently, Li, Liao, Velani and Zorin \cite{LLVZ23A} extensively studied the shrinking target problem for matrix transformations of tori, which was initialed in \cite{HilVelani95T}.  Let $T={\rm diag}(t_1, t_2)$ be a diagonal integral matrix and  let $\psi: \N\to\R_{>0}$ be a positive function. Set
 \[W^{\times}(T,\psi)=\left\{x\in [0,1]^2:\|t_1^nx\|\|t_2^ny\|<\psi(n)\text{ for i.m. }n\in\N\right\}.\]
 As a special case of a more general result, the Hausdorff dimension of $W^{\times}(T,\psi)$ was obtained in \cite{LLVZ23A}. Notice that $W^{\times}(T,\psi)$ corresponds to $W^{\times}_{\mathcal{A},\mathcal{B}}(\psi)$ in our setting with $\mathcal{A}=\{t_1^n\}_{n\in\N}$ and $\mathcal{B}=\{t_2^n\}_{n\in\N}$.

 Our study of the set $W_{\mathcal{A},\mathcal{B}}(\psi)$ was motivated by multiplicative Diophantine approximation on planar curves. We focus on the intersection of $W^{\times}_{\mathcal{A},\mathcal{B}}(\psi)$ with the diagonal of $[0,1]^2$. This is different from \cite{DBJL07}, \cite{VV06D} and many other works in the literature, where the planar curve is often required to be non-degenerate.

 Now we introduce our results. In our first main result, we completely determine the Hausdorff dimensions of $W_{\mathcal{A},\mathcal{B}}^{\times}(\psi)$ and $W_{\mathcal{A},\mathcal{B}}(\psi)$ under the mild assumption that each of the sequences $\mathcal{A}$ and $\mathcal{B}$ consists of distinct elements. 

\begin{thm}\label{thm1Haus}
Let $\mathcal{A}=\{a_n\}_{n\in\N}$, $\mathcal{B}=\{b_n\}_{n\in\N}$ be two infinite subsets of positive integers and $\psi:\N\to (0,1)$ be a positive function. Let $W_{\mathcal{A},\mathcal{B}}^{\times}(\psi)$ and $W_{\mathcal{A},\mathcal{B}}(\psi)$ be defined as in \eqref{eqDefWABT} and \eqref{eqDefWABD}, respectively. 
Then we have  
 \[\dim_{\rm H}W_{\mathcal{A},\mathcal{B}}^{\times}(\psi)=\min\{1+\lambda,2\}\quad \text{ and }\quad \dim_{\rm H}W_{\mathcal{A},\mathcal{B}}(\psi)=\min\{\gamma,1\},\]
 where 
\begin{equation}\label{dimsstartimes}
\lambda=\inf\left\{s\geq 0: \sum_{n=1}^{\infty}\left[a_n\left(\frac{\psi(n)}{a_n}\right)^s+b_n\left(\frac{\psi(n)}{b_n}\right)^s\right]<\infty\right\},
\end{equation}
\begin{equation}\label{dimsstar}
\gamma=\inf\left\{s\geq 0: \sum_{n=1}^{\infty}\left[a_n\left(\frac{\psi(n)}{a_n}\right)^s+{\rm gcd}(a_n,b_n)\left(\frac{\psi(n)}{a_nb_n}\right)^{\frac{s}{2}}+b_n\left(\frac{\psi(n)}{b_n}\right)^s\right]<\infty\right\}.
\end{equation}
\end{thm}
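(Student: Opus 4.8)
The plan is to reduce both parts of the theorem to a single one-dimensional statement. For a sequence $\mathcal{C}=\{c_n\}_{n\in\N}$ of positive integers and a weight $\phi:\N\to\R_{>0}$, write
\[
W_{\mathcal{C}}(\phi)=\big\{x\in[0,1]:\|c_nx\|<\phi(n)\ \text{for i.m. }n\big\},\qquad \eta_{\mathcal{C}}(\phi)=\inf\Big\{s\ge0:\sum_{n}c_n\big(\tfrac{\phi(n)}{c_n}\big)^{s}<\infty\Big\}.
\]
I would first prove the key formula, call it ($\star$):
\[
\dim_{\rm H}W_{\mathcal{C}}(\phi)=\min\{1,\eta_{\mathcal{C}}(\phi)\}.
\]
Granting ($\star$), the theorem becomes bookkeeping with the series in \eqref{dimsstartimes}--\eqref{dimsstar}: since a sum of two nonnegative series converges iff both summand series do, one gets $\lambda=\max\{\eta_{\mathcal A}(\psi),\eta_{\mathcal B}(\psi)\}$ and $\gamma=\max\{\eta_{\mathcal A}(\psi),\eta_{\mathcal B}(\psi),\eta_{\mathcal G}(\widetilde\psi)\}$, where $\mathcal G=\{g_n\}$ with $g_n=\gcd(a_n,b_n)$ and $\widetilde\psi(n)=g_n\sqrt{\psi(n)/(a_nb_n)}$, using the identity $g_n\big(\widetilde\psi(n)/g_n\big)^{s}=g_n\big(\psi(n)/(a_nb_n)\big)^{s/2}$.

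For the upper bounds I would bound, for each $n$, the $s$-dimensional Hausdorff content $\mathcal{H}^s_\infty(E_n)$ of the single-scale set $E_n$ and invoke the convergence Borel--Cantelli lemma for Hausdorff content: $\sum_n\mathcal H^s_\infty(E_n)<\infty$ forces $\mathcal H^s\big(\limsup_nE_n\big)=0$, hence $\dim_{\rm H}\le s$. For $W_{\mathcal A,\mathcal B}^{\times}(\psi)$ the set $E_n$ is a union of $\asymp a_nb_n$ translates of the hyperbolic neighbourhood $\{|\xi||\eta|<\psi(n)/(a_nb_n),\,|\xi|\le 1/a_n,\,|\eta|\le 1/b_n\}$; splitting the hyperbola dyadically (in $\xi$ on the flat branch, in $\eta$ on the steep branch) and covering each piece by squares gives, for $s=1+t$ with $t\in(0,1)$,
\[
\mathcal H^{s}_\infty(E_n)\lesssim a_n\big(\psi(n)/a_n\big)^{t}+b_n\big(\psi(n)/b_n\big)^{t},
\]
whose sum is exactly the series defining $\lambda$. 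For $W_{\mathcal A,\mathcal B}(\psi)$ the single-scale set is a subset of $[0,1]$ that I would partition into intervals of three kinds — those meeting a point $p/a_n$, those meeting a point $q/b_n$, and those meeting a \emph{coincidence point} $k/g_n$ where $\|a_nx\|$ and $\|b_nx\|$ vanish simultaneously; a count shows there are $\asymp g_n$ of the last kind, each of length $\asymp\sqrt{\psi(n)/(a_nb_n)}$, which is precisely what produces the $\gcd$-term in \eqref{dimsstar}.

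The matching lower bounds I would read off from elementary inclusions. Since $\|a_nx\|<\psi(n)$ already forces $\|a_nx\|\,\|b_ny\|<\psi(n)$ for every $y$, we have $W_{\mathcal A,\mathcal B}^{\times}(\psi)\supseteq W_{\mathcal A}(\psi)\times[0,1]$ and, symmetrically, $\supseteq[0,1]\times W_{\mathcal B}(\psi)$; the product rule $\dim_{\rm H}(X\times[0,1])=\dim_{\rm H}X+1$ (see \cite{Falconer86T}) together with ($\star$) yields
\[
\dim_{\rm H}W_{\mathcal A,\mathcal B}^{\times}(\psi)\ge 1+\min\{1,\max(\eta_{\mathcal A},\eta_{\mathcal B})\}=\min\{1+\lambda,2\}.
\]
For the diagonal set the same two inclusions give $W_{\mathcal A,\mathcal B}(\psi)\supseteq W_{\mathcal A}(\psi)\cup W_{\mathcal B}(\psi)$, and near the coincidence points the bounds $\|a_nx\|\le a_n|x-k/g_n|$ and $\|b_nx\|\le b_n|x-k/g_n|$ show $W_{\mathcal G}(\widetilde\psi)\subseteq W_{\mathcal A,\mathcal B}(\psi)$; taking the largest of the three dimensions supplied by ($\star$) gives $\dim_{\rm H}W_{\mathcal A,\mathcal B}(\psi)\ge\min\{1,\gamma\}$.

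It remains to prove ($\star$). Its upper bound is the one-dimensional instance of the covering above. For the lower bound the decisive observation is that the centres $\{p/c_n:0\le p\le c_n\}$ are \emph{exactly} $1/c_n$-spaced, so the balls $B(p/c_n,1/c_n)$ cover $[0,1]$ at every single level $n$; this makes $\{p/c_n\}$ a ubiquitous system in the strongest sense, and the ubiquity/mass-transference machinery of Beresnevich--Dickinson--Velani then delivers $\dim_{\rm H}W_{\mathcal C}(\phi)\ge\min\{1,\eta_{\mathcal C}\}$ with no monotonicity or coprimality hypothesis on $\mathcal C$ — the perfect spacing of the centres is exactly what dissolves the overlap difficulties that usually obstruct divergence-type lower bounds. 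I expect the genuine obstacle to be the upper bound for the diagonal set: because $\|a_nx\|$ and $\|b_nx\|$ are coupled through the single variable $x$, isolating the correct $\gcd(a_n,b_n)$-dependence of the coincidence intervals, and showing that the three families above really exhaust the single-scale set up to an admissible covering in every regime of the ratio $a_n/b_n$ (the asymmetric range $a_n\gg b_n$, where the linear approximations near $k/g_n$ cease to be valid, being the most delicate), is the step requiring the most care.
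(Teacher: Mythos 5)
Your overall architecture coincides with the paper's: the same reduction of $\lambda$ and $\gamma$ to maxima of one-dimensional exponents, the same inclusions $W_{\mathcal A,\mathcal B}^{\times}(\psi)\supseteq W_{\mathcal A}(\psi)\times[0,1]$, $W_{\mathcal A,\mathcal B}(\psi)\supseteq W_{\mathcal A}(\psi)\cup W_{\mathcal B}(\psi)\cup W_{\mathcal G}(\widetilde\psi)$ for the lower bounds, and the same covering-plus-Borel--Cantelli strategy for the upper bounds. However, your proof of the key formula $(\star)$ has a genuine gap. You claim that because the centres $p/c_n$ are perfectly $1/c_n$-spaced within a single level, ubiquity/mass-transference ``dissolves the overlap difficulties.'' But the obstruction to divergence-type lower bounds for $\limsup$ sets is not the within-level overlap; it is the correlation between different levels $n\neq m$, which is governed by the divisibility structure of $c_n$ and $c_m$. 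Mass transference requires as input that a blown-up $\limsup$ set has full Lebesgue measure, and for arbitrary subsequences $\mathcal C$ and arbitrary non-monotonic weights this full-measure statement is precisely the hard (and, for Lebesgue measure, sometimes false -- this is the Duffin--Schaeffer phenomenon) part. The statement $(\star)$ is nevertheless true: it is Rynne's theorem (building on Hinokuma--Shiga), valid for arbitrary nonnegative error functions, and the paper simply invokes it (Theorem \ref{thm2KM} and Lemma \ref{lemHanM}) after encoding the subsequence $\mathcal A$ into an error function $\phi$ supported on $\mathcal A$. You should either cite that result or supply a genuinely different proof; the ubiquity argument as sketched does not close.

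The second soft spot is the one you yourself flag: the upper bound for the diagonal set. Your three-family decomposition (intervals near $p/a_n$, near $q/b_n$, near a coincidence point $k/g_n$) does not exhaust the single-scale set, because an interval of $F_{a_n,b_n}(\delta)$ can arise from a \emph{near-miss} pair $0<|p/a_n-q/b_n|<\theta$ that is close to no exact coincidence point. The content of the paper's Lemma \ref{corcovering} is exactly the count $N_{a,b}(\varrho,\eta)\ll \gcd(a,b)+ab\max\{\varrho/a,\eta/b\}$, where the $\gcd$ term counts exact coincidences and the second term counts near-misses; the intermediate regime is handled with the Erd\H{o}s--Tur\'an inequality. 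Without such a count your covering bound for $F_{a_n,b_n}(\delta)$, and hence the convergence half of the $\gamma$-formula, is not established. The rest of your bookkeeping (the identity $g_n(\widetilde\psi(n)/g_n)^s=g_n(\psi(n)/(a_nb_n))^{s/2}$, the dyadic hyperbola covering for $E_n$, and the product inequality for Hausdorff dimension) matches the paper and is fine.
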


Theorem \ref{thm1Haus} generalizes \cite[Theorem 9]{LLVZ23A} in the homogeneous case with $d=2$. Moreover, just like the one dimensional case (cf. \cite{KM20O}), when $\psi$ is not assumed to be monotonic, statements on $W^{\times}_{2}(\psi)$ (cf. \eqref{vb2}) can be reformulated as statements on $W^{\times}_{\mathcal{A},\mathcal{B}}(\psi)$  with $a_n=b_n$ for all $n$. Therefore, the first part of Theorem \ref{thm1Haus} also generalizes \cite[Corollary 4]{LFMH24T}.

The proof of Theorem \ref{thm1Haus} is partly based on the following general convergence result for the Hausdorff measures of $W_{\mathcal{A},\mathcal{B}}^{\times}(\psi)$ and $W_{\mathcal{A},\mathcal{B}}(\psi)$.

\begin{thm}\label{thm1HMeas}
Let $\mathcal{A}=\{a_n\}_{n\in\N}$, $\mathcal{B}=\{b_n\}_{n\in\N}$ be two sequences of positive integers and $\psi:\N\to (0,1)$ be a positive function. 
Let $s\in (0,1)$.
 Then  the following hold.
\begin{itemize}
\item[(i)]If 
\begin{equation*}\label{eqlcmqnvnqanbn}
\sum_{n=1}^{\infty}\left[a_n\left(\frac{\psi(n)}{a_n}\right)^s+b_n\left(\frac{\psi(n)}{b_n}\right)^s\right]<\infty,
\end{equation*}
then $\mathcal{H}^{1+s}(W_{\mathcal{A},\mathcal{B}}^{\times}(\psi))=0$.
\item[(ii)]If
\begin{equation*}\label{eqlcmqnvnqanbn}
\sum_{n=1}^{\infty}\left[a_n\left(\frac{\psi(n)}{a_n}\right)^s+{\rm gcd}(a_n,b_n)\left(\frac{\psi(n)}{a_nb_n}\right)^{\frac{s}{2}}+b_n\left(\frac{\psi(n)}{b_n}\right)^s\right]<\infty,
\end{equation*}
then $\mathcal{H}^{s}(W_{\mathcal{A},\mathcal{B}}(\psi))=0$. 
\end{itemize}
\end{thm}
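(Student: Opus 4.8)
The plan is to deduce both statements from the standard covering (``convergence Borel--Cantelli'') principle for Hausdorff measure: writing $W^\times_{\mathcal A,\mathcal B}(\psi)=\limsup_n E_n^\times$ with $E_n^\times=\{(x,y):\|a_nx\|\|b_ny\|<\psi(n)\}$ (and likewise $W_{\mathcal A,\mathcal B}(\psi)=\limsup_n F_n$ with $F_n=\{x:\|a_nx\|\|b_nx\|<\psi(n)\}$), it suffices to produce for each $n$ an efficient cover of the single level set whose $t$-dimensional diameter-sum is $\lesssim$ the $n$-th term of the series ($t=1+s$ in (i), $t=s$ in (ii)). Convergence of the series then makes the tail $\sum_{n\ge N}(\cdots)$ tend to $0$, so the Hausdorff content $\mathcal H^t_\infty\big(\bigcup_{n\ge N}E_n\big)\to0$ and hence $\mathcal H^t(\limsup_n E_n)=0$. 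I would first note that convergence of either series forces $\psi(n)\to0$ (otherwise the $n$-th term is bounded below), so I may assume $\psi(n)$ is as small as needed.

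For part (i) I would cover $E_n^\times$ by a dyadic scheme in the variable $u=\|a_nx\|$. On the band $u\in[2^{-j-1},2^{-j})$ the admissible $x$ form $\sim a_n$ intervals of length $\sim 2^{-j}/a_n$, while the constraint becomes $\|b_ny\|<v_{\max}:=\psi(n)2^{j+1}$, whose solution set is $\sim b_n$ intervals of length $\sim v_{\max}/b_n$ (all of $[0,1]$ once $v_{\max}\ge 1/2$). Thus $E_n^\times$ is covered by $\sim a_nb_n$ axis-parallel rectangles of dimensions $\ell_x\times\ell_y=\tfrac{2^{-j}}{a_n}\times\tfrac{\psi(n)2^{j}}{b_n}$; covering each by squares of its shorter side contributes $\sim a_nb_n\,\max(\ell_x,\ell_y)\min(\ell_x,\ell_y)^s$ to the $(1+s)$-sum. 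Splitting over $j$ according to whether the $x$- or the $y$-side is shorter (crossover at $2^{2j}\sim b_n/(a_n\psi(n))$) and including the regime $2^j\gtrsim 1/\psi(n)$ where $y$ is unconstrained, each resulting geometric sum is dominated by its extreme term because $0<s<1$, and collapses to either $a_n^{1-s}\psi(n)^s=a_n(\psi(n)/a_n)^s$ or $b_n^{1-s}\psi(n)^s=b_n(\psi(n)/b_n)^s$. Summing over $n$ gives exactly the hypothesis of (i).

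Part (ii) is the substantive one. Writing $\|a_nx\|=a_n|x-p/a_n|$ and $\|b_nx\|=b_n|x-q/b_n|$ for the nearest $p,q$, the defining inequality becomes $|x-p/a_n|\,|x-q/b_n|<c$ with $c:=\psi(n)/(a_nb_n)$, so $F_n$ is covered by the solution sets of the single-pair inequalities for the relevant (mutually nearest) pairs $(p,q)$. An elementary analysis of $|x-\xi||x-\eta|<c$ shows its solution set is one interval of length $\sim\sqrt c$ when the gap $D:=|\xi-\eta|$ satisfies $D\lesssim\sqrt c$, and otherwise two intervals of length $\sim c/D$ about $\xi$ and $\eta$. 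Organising the cover by the gaps $D_p={\rm dist}(p/a_n,\tfrac{1}{b_n}\Z)$, the key arithmetic input is that $D_p$ takes the values $j/{\rm lcm}(a_n,b_n)$, that $D_p=0$ occurs exactly at the $\sim{\rm gcd}(a_n,b_n)$ common points of $\tfrac{1}{a_n}\Z$ and $\tfrac{1}{b_n}\Z$, and that each positive value is attained $\sim{\rm gcd}(a_n,b_n)$ times. The common points yield intervals of length $\sim\sqrt c$ and contribute $\sim{\rm gcd}(a_n,b_n)\,c^{s/2}={\rm gcd}(a_n,b_n)(\psi(n)/(a_nb_n))^{s/2}$, the middle term; the far gaps give $\sim{\rm gcd}(a_n,b_n)\,(cL)^s\sum_j j^{-s}$ with $L={\rm lcm}(a_n,b_n)$, which, since $s<1$, is dominated by the largest $j$ and collapses to $a_n^{1-s}\psi(n)^s$, while the symmetric sum over $q$ produces $b_n^{1-s}\psi(n)^s$; the remaining ``intermediate'' pairs ($0<D_p\lesssim\sqrt c$) are absorbed into the $a_n$- or $b_n$-term using $\psi(n)<1\le\max(a_n/b_n,b_n/a_n)$.

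I expect the main obstacle to be the arithmetic and combinatorial book-keeping in (ii): recognising that the simultaneous smallness of $\|a_nx\|$ and $\|b_nx\|$ at the $\sim{\rm gcd}(a_n,b_n)$ common points is precisely what produces the $\sqrt{\cdot}$-scale middle term, pinning down the distribution of the gaps $D_p$ through ${\rm gcd}$ and ${\rm lcm}$, and---most delicately---setting up the cover so that each $x\in F_n$ is charged to its genuinely nearest pair $(p,q)$, so that the union of single-pair intervals covers $F_n$ without the wasteful overcount that a naive union over all pairs would incur. By comparison, part (i) requires only careful book-keeping of the two thin-direction regimes in the dyadic decomposition.
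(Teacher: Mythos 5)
Your part (i) follows essentially the paper's own route: the paper's Lemma \ref{lemHconEabD} performs the same dyadic decomposition in $\|a_nx\|$ (splitting off the ``square'' piece where both norms are $<\delta$ and the two thin regimes), covers each rectangle by squares of its shorter side via Lemma \ref{lemHconEnde}, and the geometric sums collapse exactly as you describe before the Hausdorff--Cantelli step. Your part (ii), however, reaches the same content bound as the paper's Lemma \ref{lemHsinftyee} by a genuinely different route. The paper first proves a covering lemma (Lemma \ref{corcovering}): $F_{a,b}(\varrho,\eta)$ is covered by $\ll \gcd(a,b)+ab\max\{\varrho/a,\eta/b\}$ intervals of length $2\min\{\varrho/a,\eta/b\}$, the count coming from bounding the number of intersecting pairs $B(k/a,\varrho/a)\cap B(\ell/b,\eta/b)$, with the Erd\H{o}s--Tur\'an inequality handling the intermediate range of the gap parameter; it then reuses the dyadic decomposition of part (i). You instead analyse the hyperbolic inequality $|x-p/a|\,|x-q/b|<c$ directly and classify the rationals by the gaps $D_p=\mathrm{dist}(p/a,\tfrac1b\Z)$. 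Your arithmetic is correct: with $g=\gcd(a,b)$, $a'=a/g$, $b'=b/g$ one has $D_p=\mathrm{dist}(pb',a'\Z)/\mathrm{lcm}(a,b)$, and since $p\mapsto pb'\bmod a'$ permutes residues, $D_p=0$ occurs for exactly $g+1$ values of $p$ and each positive value is attained at most $2(g+1)$ times; this exact equidistribution is what lets you bypass Erd\H{o}s--Tur\'an, making your argument more elementary. The charging issue you flag resolves cleanly: for $x\in F_n$ with nearest pair $(p,q)$ and (say) $|x-p/a|\le|x-q/b|$, the bound $|x-q/b|\ge D_p-|x-p/a|$ gives $|x-p/a|<\min\{\sqrt c,\,2c/D_p\}$, so $F_n$ is covered by one such interval per $p$ plus, symmetrically, one per $q$; and your intermediate contribution $\ll g\,(L\sqrt c)\,c^{s/2}=(ab)^{(1-s)/2}\psi^{(1+s)/2}$ is absorbed into the two extreme terms by AM--GM since $\psi<1$. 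What the paper's covering lemma buys over your hyperbola analysis is reusability: Lemma \ref{corcovering} is applied again verbatim to the simultaneous set $S_{\mathcal{A},\mathcal{B}}(\psi,\phi)$ in Section \ref{Sec4}, which your approach would not directly yield.
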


Theorem \ref{thm1HMeas} (see also Theorem \ref{ThmNoNLacunary})  generalizes a recent result of L{\"u} and Zhang \cite{ZhangLu21}, who considered the special case that $\mathcal{A}=\{2^n\}_{n\in\N}$ and $\mathcal{B}=\{3^n\}_{n\in\N}$ and proved a zero-infinity dichotomy for the Hausdorff measure of $W_{\mathcal{A},\mathcal{B}}(\psi)$. 

As for the classical case, our setting of multiplicatively approximation is closely related to its simultaneous counterpart. Recently, Li, Liao, Velani and Zorin (\cite{MatrixManifold}, see also \cite[Remark 13]{LLVZ23A}) proved (partially conditioned on the validity of the {\em abc}-conjecture) that for $0\leq \tau\leq 1$, the set 
\[\left\{x\in [0,1]: \max\{\|2^nx\|,\|3^nx\|\}<3^{-n\tau}\text{ for i.m. }n\in\N\right\}\]
has Hausdorff dimension $\frac{1-\tau}{1+\tau}$. For general integers $b>a\geq 2$ beyond the case that $a=2$ and $b=3$, we find the Hausdorff dimension of the corresponding set  when $\tau>1$ (cf. Theorem \ref{ThmSabPsi}). This is also based on establishing a convergence result for the Hausdorff measure of the set in question (cf. \eqref{eqSimuCon}-\eqref{eqanlesbnspsi}).

\begin{thm}\label{ThmSabPsi}
Let $b>a\geq 2$ be positive integers and $\psi:\N\to(0,1)$ be a positive function. Set 
\[S_{a,b}(\psi)=\{x\in [0,1]: \max\{\|a^nx\|,\|b^nx\|\}<\psi(n)\text{ for i.m. }n\in\N\}.\]
If $\tau:=\varliminf_{n\to\infty}\frac{\log_b\psi(n)^{-1}}{n}>1$, then we have 
\[\dim_{\rm H}S_{a,b}(\psi)=\frac{\log_b{\rm gcd}(a,b)}{1+\tau}.\]
\end{thm}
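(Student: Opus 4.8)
The plan is to use the hypothesis $\tau>1$ to collapse the two conditions $\|a^nx\|<\psi(n)$ and $\|b^nx\|<\psi(n)$ into a single, well-separated family of intervals, after which $S_{a,b}(\psi)$ becomes a clean $\limsup$ set whose dimension can be bounded above by its natural cover and below by the mass transference principle. Throughout write $g=\gcd(a,b)$, and note $\gcd(a^n,b^n)=g^n$ and $\log_b g<1$ since $g\le a<b$.

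\textbf{Reduction.} Fix $\epsilon\in(0,\tau-1)$. Since $\tau=\varliminf_{n}\frac{\log_b\psi(n)^{-1}}{n}$, one has $\psi(n)<b^{-n(\tau-\epsilon)}$ for all large $n$. Suppose $\max\{\|a^nx\|,\|b^nx\|\}<\psi(n)$ and let $p/a^n,q/b^n$ realize the two distances, so $|p/a^n-q/b^n|<2\psi(n)/a^n$. Because $pb^n-qa^n\in g^n\Z$, either $p/a^n=q/b^n$ or $|p/a^n-q/b^n|\ge g^n/(a^nb^n)$; in the latter case one would get $g^n/b^n<2\psi(n)<2b^{-n(\tau-\epsilon)}$, i.e. $\log_b g<1-\tau+\epsilon+o(1)<0$, which is impossible for large $n$. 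Hence for $n\ge N_0$ the two nearest rationals coincide at a point of the form $m/g^n$, and (as $b>a$) the defining condition reduces to $|x-m/g^n|<\psi(n)/b^n$. Therefore, writing $r_n:=\psi(n)/b^n$,
\[
S_{a,b}(\psi)=\limsup_{n\to\infty}\ \bigcup_{m=0}^{g^n} B\!\left(\tfrac{m}{g^n},r_n\right)\cap[0,1],
\]
a $\limsup$ of about $g^n$ intervals of radius $r_n$ at level $n$.

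\textbf{Upper bound.} I would cover $S_{a,b}(\psi)$ by the tails $\bigcup_{n\ge N}\bigcup_m B(m/g^n,r_n)$, giving
\[
\mathcal{H}^s\big(S_{a,b}(\psi)\big)\ \ll\ \sum_{n\ge N}g^n r_n^{\,s}=\sum_{n\ge N}g^n\Big(\tfrac{\psi(n)}{b^n}\Big)^{s}.
\]
Using $\psi(n)<b^{-n(\tau-\epsilon)}$, the general term is at most $b^{\,n[\log_b g-s(1+\tau-\epsilon)]}$, so the series converges once $s>\frac{\log_b g}{1+\tau-\epsilon}$. Letting $\epsilon\to0$ yields $\mathcal{H}^s(S_{a,b}(\psi))=0$ for every $s>\frac{\log_b g}{1+\tau}$, hence $\dim_{\rm H}S_{a,b}(\psi)\le\frac{\log_b g}{1+\tau}$.

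\textbf{Lower bound and main obstacle.} For the complementary bound I would exploit that $\tau$ is a $\varliminf$: there is a sequence $n_k\to\infty$ with $\psi(n_k)>b^{-n_k(\tau+\epsilon)}$, so $r_{n_k}>b^{-n_k(1+\tau+\epsilon)}$. Fixing $s<\frac{\log_b g}{1+\tau}$ and $\epsilon$ small enough that $s<\frac{\log_b g}{1+\tau+\epsilon}$, one checks $r_{n_k}^{\,s}>b^{-n_k s(1+\tau+\epsilon)}\ge\tfrac12 g^{-n_k}$ for large $k$, so the inflated intervals $B(m/g^{n_k},r_{n_k}^{\,s})$ already cover $[0,1]$ at each such level; thus the $\limsup$ of the inflated subfamily has full Lebesgue measure. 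Since $s<\frac{\log_b g}{1+\tau}<1=d$, the mass transference principle of Beresnevich and Velani then gives $\mathcal{H}^s(S_{a,b}(\psi))=\infty$, so $\dim_{\rm H}S_{a,b}(\psi)\ge s$, and letting $s\uparrow\frac{\log_b g}{1+\tau}$ finishes the proof. The genuine obstacle is the reduction step: it is exactly the assumption $\tau>1$ that kills the case $p/a^n\neq q/b^n$ and forces the two conditions onto the single scale $1/g^n$; without it the configuration is genuinely two-scale and this dimension formula is not expected to hold. Everything after the reduction is a routine matching of the uniform upper bound on $\psi$ (for convergence) against a well-chosen subsequence (for the transference half).
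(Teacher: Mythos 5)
Your proposal is correct, and while the endpoints are the same as in the paper (an upper bound from a cover by roughly $g^n$ intervals of radius $\psi(n)/b^n$, and a lower bound from the sub-limsup-set supported on the points $m/g^n$), the route to the upper bound is genuinely different. The paper derives its covering count from the general Lemma \ref{corcovering}, whose proof goes through the Erd\H{o}s--Tur\'an inequality and which is designed to handle arbitrary denominators $a_n,b_n$ and arbitrary target sizes; it then specializes to \eqref{eqanlesbnspsi}--\eqref{eqDefKappa} and observes that the term $b^n\psi(n)\left(\psi(n)/b^n\right)^s$ is negligible because $\tau>1$. You instead exploit the power structure and the strong decay directly: the divisibility $pb^n-qa^n\in g^n\Z$ together with $\psi(n)<b^{-n(\tau-\epsilon)}$ forces the two approximating rationals to coincide at a point of $g^{-n}\Z$, so that for large $n$ the level-$n$ set is \emph{exactly} $\bigcup_m B(m/g^n,\psi(n)/b^n)$. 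This is sharper than what the paper's counting lemma yields (a set identity rather than a covering estimate), entirely elementary, and it makes the spurious second term never appear; the price is that it is special to the regime $\tau>1$ and to geometric sequences, whereas Lemma \ref{corcovering} is the engine for the paper's more general Theorems \ref{thm1HMeas} and \ref{thm1Haus}. For the lower bound the paper invokes Lemma \ref{LemSubset} and then cites the known Hausdorff dimension of the shrinking-target set $\{x:\|g^nx\|<g^n\psi(n)/b^n \text{ i.m.}\}$ from \cite{HilVelani95T} and \cite{SLMWBW13S}; you re-derive that dimension on the spot via the mass transference principle along a subsequence realizing the $\varliminf$, which is a standard and correct substitute (and is vacuous, as it should be, when $\gcd(a,b)=1$ and the claimed dimension is $0$). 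Both arguments are sound; yours is more self-contained for this particular theorem, the paper's is organized to reuse its general machinery.
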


The paper is organized as follows. In Section \ref{Sec2}, we give some preliminary lemmas about   sets which arise naturally in the definition of the limsup sets  that  we are concerned with. In Section \ref{Sec3},  we first prove Theorem \ref{thm1HMeas} and  then  apply it to deduce Theorem \ref{thm1Haus}. Theorem \ref{ThmSabPsi} is proved in Section \ref{Sec4}. Finally,  in the last section, we 
give some remarks concerning sharpness and generalizations of our results. 

\section{Preliminary lemmas}\label{Sec2}

We  first establish some preliminary lemmas which will be used in the proofs of our main results. To ease notation, we will use the Vinogradov symbol ``$\ll$'' to indicate an inequality with an unspecified positive multiplicative constant.

Given  positive integers $a,b$ and positive real numbers $ \varrho, \eta, \delta\in (0,1)$, set
 \[E_{a,b}(\varrho,\eta)=\left\{(x,y)\in [0,1]^2: \|ax\|<\varrho, \|by\|<\eta\right\}, \]
 \[E_{a,b}(\delta)=\left\{(x,y)\in [0,1]^2: \|ax\|\|by\|<\delta^2\right\},\]
 \[F_{a,b}(\varrho,\eta)=\left\{x\in [0,1]: \|ax\|<\varrho, \|bx\|<\eta\right\}.\]
 \[F_{a,b}(\delta)=\left\{x\in [0,1]: \|ax\|\|bx\|<\delta^2\right\}.\]
These sets arise naturally in the limsup sets that we are concerned with in this paper. For instance, for the sets defined in \eqref{eqDefWABT} and \eqref{eqDefWABD}, we have 
\[W_{\mathcal{A},\mathcal{B}}^{\times}(\psi)=\limsup_{n\to\infty}E_{a_n,b_n}\left(\psi(n)^{\frac{1}{2}}\right), \quad W_{\mathcal{A},\mathcal{B}}(\psi)=\limsup_{n\to\infty}F_{a_n,b_n}\left(\psi(n)^{\frac{1}{2}}\right).\]

 Our first lemma gives an upper bound for the Hausdorff content of the set $E_{a,b}(\varrho,\eta)$, which will be used in the proof of part (i) of Theorem \ref{thm1HMeas}.

 \begin{lem}\label{lemHconEnde}
For $0<s\leq 1$, we have 
\[\mathcal{H}^{1+s}_{\infty}(E_{a,b}(\varrho,\eta))\leq 64\varrho\eta\min\left\{\frac{\varrho}{a},\frac{\eta}{b}\right\}^{s-1}.\]
\end{lem}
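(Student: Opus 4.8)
The plan is to exploit the explicit product structure of $E_{a,b}(\varrho,\eta)$ and cover it efficiently by squares, using crucially that the Hausdorff content $\mathcal{H}^{1+s}_\infty$ permits covering sets of arbitrarily large diameter. First I would describe the set geometrically. The condition $\|ax\|<\varrho$ forces $x$ into one of the intervals $(k/a-\varrho/a,\,k/a+\varrho/a)$ with $k\in\{0,1,\dots,a\}$, so $\{x\in[0,1]:\|ax\|<\varrho\}$ is covered by at most $a+1$ intervals of length $2\varrho/a$; likewise $\{y\in[0,1]:\|by\|<\eta\}$ is covered by at most $b+1$ intervals of length $2\eta/b$. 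Hence $E_{a,b}(\varrho,\eta)$ is contained in a union of at most $(a+1)(b+1)$ axis-parallel rectangles, each of width $2\varrho/a$ (in the $x$-direction) and height $2\eta/b$ (in the $y$-direction).

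Next I would cover each such rectangle by squares whose side equals the \emph{shorter} side of the rectangle; this is the step that makes the exponent $1+s>1$ pay off. By symmetry between the two coordinates (swapping $(a,\varrho)\leftrightarrow(b,\eta)$), it suffices to treat the case $\varrho/a\le \eta/b$, which is exactly the case $\min\{\varrho/a,\eta/b\}=\varrho/a$; the other case is identical after relabelling and yields the term with $\eta/b$. In this case each rectangle is tiled by $\lceil (\eta a)/(\varrho b)\rceil \le 2\eta a/(\varrho b)$ squares of side $2\varrho/a$, each of diameter $2\sqrt2\,\varrho/a$.

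Summing the $(1+s)$-th powers of these diameters over all squares and all rectangles gives
\[
\mathcal{H}^{1+s}_\infty(E_{a,b}(\varrho,\eta)) \le (a+1)(b+1)\cdot \frac{2\eta a}{\varrho b}\cdot\Bigl(\frac{2\sqrt2\,\varrho}{a}\Bigr)^{1+s}.
\]
Using $(a+1)(b+1)\le 4ab$ and simplifying, the right-hand side equals $8\,(2\sqrt2)^{1+s}\,\eta\,\varrho^{s}\,a^{1-s}$. Since $0<s\le1$ we have $(2\sqrt2)^{1+s}=2^{3(1+s)/2}\le 2^3=8$, so the whole expression is at most $64\,\eta\,\varrho^{s}\,a^{1-s}=64\varrho\eta\,(\varrho/a)^{s-1}$, which is the claimed bound with the minimum realized by $\varrho/a$.

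The computation is short, so there is no serious analytic obstacle; the points that require care are purely organizational. The main one is choosing the right covering --- squares of the smaller side-length rather than the original rectangles --- because only then does raising the diameter to the power $1+s$ produce the factor $(\varrho/a)^{s-1}$ instead of a worse power. Second, one must keep the constant under control so that $8\cdot(2\sqrt2)^{1+s}\le64$ across the whole range $s\in(0,1]$, which is exactly why the stated constant is $64$. Finally, the symmetry reduction is what upgrades the one-sided estimate to the minimum of the two quantities.
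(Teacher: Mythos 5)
Your proposal is correct and follows essentially the same route as the paper: decompose $E_{a,b}(\varrho,\eta)$ into the $(a+1)(b+1)$ rectangles $B(k/a,\varrho/a)\times B(\ell/b,\eta/b)$, cover each by roughly $2\max\{\varrho/a,\eta/b\}/\min\{\varrho/a,\eta/b\}$ squares whose side is the shorter dimension, and sum the $(1+s)$-th powers of the diameters, absorbing $(a+1)(b+1)\le 4ab$ and $(2\sqrt2)^{1+s}\le 8$ into the constant $64$. The only cosmetic difference is your symmetry reduction to the case $\varrho/a\le\eta/b$, where the paper carries the $\min$/$\max$ notation throughout.
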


\begin{proof}
Notice that 
\begin{align*}
E_{a,b}(\varrho,\eta)&=\bigcup_{k=0}^{a}\bigcup_{\ell=0}^{b}\left\{(x,y)\in [0,1]^2: |ax-k|<\varrho, |by-\ell|<\eta\right\}\\
&\subseteq \bigcup_{k=0}^{a}\bigcup_{\ell=0}^{b}\left[B\left(\frac{k}{a}, \frac{\varrho}{a}\right)\times B\left(\frac{\ell}{b}, \frac{\eta}{b}\right)\right].
\end{align*}
Here and afterwards, for $x\in\R$ and $r>0$, $B(x,r)$ denotes the open interval centered at $x$ of radius $r$. For each rectangular  in the above union, it can be covered by 
\[\frac{2\max\left\{\frac{\varrho}{a},\frac{\eta}{b}\right\}}{\min\left\{\frac{\varrho}{a},\frac{\eta}{b}\right\}}\]
many squares of side length $\min\left\{\frac{2\varrho}{a},\frac{2\eta}{b}\right\}$. Hence we have
\begin{align*}
\mathcal{H}^{1+s}_{\infty}(E_{a,b}(\varrho,\eta))&\leq (a+1)(b+1)\frac{2\max\left\{\frac{\varrho}{a},\frac{\eta}{b}\right\}}{\min\left\{\frac{\varrho}{a},\frac{\eta}{b}\right\}}\left(\sqrt{2}\min\left\{\frac{2\varrho}{a},\frac{2\eta}{b}\right\}\right)^{1+s}\\
&\leq 64ab\max\left\{\frac{\varrho}{a},\frac{\eta}{b}\right\}\min\left\{\frac{\varrho}{a},\frac{\eta}{b}\right\}^{s}\\
&=64\varrho\eta\min\left\{\frac{\varrho}{a},\frac{\eta}{b}\right\}^{s-1}.
\end{align*}
This completes the proof of the lemma.
\end{proof}

Based on Lemma \ref{lemHconEnde}, we have the following.

\begin{lem}\label{lemHconEabD}
Let $s\in (0,1)$. Then there exists a constant $C_1$ depending only on $s$ such that 
\[\mathcal{H}^{1+s}_{\infty}(E_{a,b}(\delta))\leq C_1\left[a\left(\frac{\delta^2}{a}\right)^s+b\left(\frac{\delta^2}{b}\right)^s\right].\]
\end{lem}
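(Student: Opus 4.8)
The plan is to decompose the set $E_{a,b}(\delta)$ according to the dyadic scale of the factor $\|ax\|$, so that the multiplicative condition $\|ax\|\|by\|<\delta^2$ becomes a union of product-type conditions to which Lemma \ref{lemHconEnde} directly applies. Concretely, I would write the region $\|ax\|\|by\|<\delta^2$ as a countable union over integers $j\geq 0$ of pieces where $\|ax\|$ lives in a dyadic annulus $[2^{-j-1}\delta, 2^{-j}\delta)$ (together with the complementary ranges where one factor is forced to be large). On such a piece, $\|ax\|<2^{-j}\delta=:\varrho_j$ forces $\|by\|<\delta^2/\varrho_j=2^{j}\delta=:\eta_j$, so the piece is contained in $E_{a,b}(\varrho_j,\eta_j)$. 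The countable subadditivity of Hausdorff content then gives
\[
\mathcal{H}^{1+s}_{\infty}(E_{a,b}(\delta))\leq \sum_{j} \mathcal{H}^{1+s}_{\infty}\bigl(E_{a,b}(\varrho_j,\eta_j)\bigr),
\]
and I would plug in the bound from Lemma \ref{lemHconEnde} for each term.

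The next step is to evaluate the bound $64\varrho_j\eta_j\min\{\varrho_j/a,\eta_j/b\}^{s-1}$ from Lemma \ref{lemHconEnde}. Since $\varrho_j\eta_j=\delta^2$ for every $j$, the prefactor is constant, and the whole $j$-dependence is carried by $\min\{2^{-j}\delta/a,\ 2^{j}\delta/b\}^{s-1}$. Because $s-1<0$, the minimum raised to a negative power is the \emph{larger} of the two reciprocal-type quantities, and as $j$ ranges over the nonnegative integers the sum $\sum_j \min\{2^{-j}\delta/a,2^{j}\delta/b\}^{s-1}$ behaves like two geometric series meeting near the crossover index $j^*$ where $2^{-j}\delta/a\approx 2^{j}\delta/b$. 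For $j$ small (before crossover) the $\eta_j/b$ term is the minimum and contributes a convergent geometric series dominated by its first term $\asymp (\delta/b)^{s-1}$; for $j$ large the $\varrho_j/a$ term dominates and contributes a series whose leading behavior near the crossover matches $\asymp(\delta/a)^{s-1}$-type scaling. Summing both tails, I expect the total to be $\ll \delta^2\bigl[(\delta^2/a)^{s-1}\cdot(\text{something})+(\delta^2/b)^{s-1}\cdot(\text{something})\bigr]$; after rewriting $\delta^2(\delta^2/a)^{s-1}=a(\delta^2/a)^s$ and similarly for $b$, this is exactly the claimed $C_1[a(\delta^2/a)^s+b(\delta^2/b)^s]$, with $C_1$ absorbing the geometric-series constants (which depend only on $s$ through the ratio $2^{s-1}<1$).

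I would also need to handle the two boundary regimes that the dyadic decomposition does not literally cover: the contribution from $\|ax\|$ very small (the ``$j\to\infty$'' tail, where $\|ax\|<\delta^2$ and $\|by\|$ is essentially unrestricted, i.e. $\eta_j$ exceeds $1$) and the contribution from $\|ax\|$ of order $\delta$ or larger (where the matching $\eta_j$ drops below the natural scale). When $\eta_j\geq 1/2$ the condition $\|by\|<\eta_j$ is vacuous on $[0,1]$, so I should cap $\eta_j$ at a constant and truncate the annulus decomposition at the index where $\eta_j\asymp 1$; the corresponding terms are then controlled by a single application of Lemma \ref{lemHconEnde} with $\eta$ replaced by that constant, contributing an $a(\delta^2/a)^s$-type term. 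Symmetrically for the $\|by\|$ side. Keeping the two extreme ranges inside the geometric-series estimate, rather than summing an unbounded number of them, is where a little care is required.

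The main obstacle I anticipate is precisely this bookkeeping of the crossover: making sure that the geometric series are summed from the correct endpoints and that the truncation at $\eta_j\asymp 1$ (and the symmetric truncation at $\varrho_j\asymp 1$) does not produce a term that degrades the exponent $s$ or introduces a spurious factor depending on $a,b$ beyond the allowed $a(\delta^2/a)^s+b(\delta^2/b)^s$. Since $0<s<1$ makes both $s-1<0$ and $s>0$, the series converge at both ends with ratio a fixed power of $2$, so the constant $C_1$ will indeed depend only on $s$; the real work is verifying that no boundary term scales worse than the two asserted summands. Everything else is the routine algebra of combining $\varrho_j\eta_j=\delta^2$ with the explicit Lemma \ref{lemHconEnde} estimate.
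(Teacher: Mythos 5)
Your plan is correct and is essentially the paper's own argument: the paper likewise splits $E_{a,b}(\delta)$ into the piece where both $\|ax\|,\|by\|<\delta$ plus two dyadically decomposed pieces with $\varrho_j\eta_j\asymp\delta^2$, applies Lemma \ref{lemHconEnde} to each, and sums the resulting geometric series with the same truncation at scale $1$, the tail near the truncation producing the $a(\delta^2/a)^s$ and $b(\delta^2/b)^s$ terms exactly as you predict. The only cosmetic difference is that you run the dyadic scales downward in $\|ax\|$ from $\delta$ rather than upward from $\delta$, which is the same decomposition after relabelling.
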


\begin{proof}
We decompose $E_{a,b}(\delta)$ as $E_{a,b}(\delta)=E_1\cup E_{2}\cup E_3$,
where 
\[E_1=\left\{(x,y)\in [0,1]^2: \|ax\|<\delta, \|by\|<\delta\right\},\]
\[E_2=\left\{(x,y)\in [0,1]^2: \|ax\|\geq \delta, \|ax\|\|by\|<\delta^2\right\},\]
\[E_3=\left\{(x,y)\in [0,1]^2: \|by\|\geq \delta, \|ax\|\|by\|<\delta^2\right\}.\]
By Lemma \ref{lemHconEnde}, we see that 
\begin{align*}
\mathcal{H}^{1+s}_{\infty}(E_1)&\leq 64\delta^2\min\left\{\frac{\delta}{a},\frac{\delta}{b}\right\}^{s-1}\\
&=64\delta^{1+s}\max\{a,b\}^{1-s}.	
\end{align*}
Note that 
\begin{align*}
E_2&=\bigcup_{j\geq 0:2^{j+1}\delta\leq 1}\left\{(x,y)\in [0,1]^2: 2^j\delta\leq \|ax\|<2^{j+1}\delta, \|ax\|\|by\|<\delta^2\right\}\\
&\subseteq\bigcup_{j\geq 0:2^{j+1}\delta\leq 1}\left\{(x,y)\in [0,1]^2: \|ax\|<2^{j+1}\delta, \|by\|<2^{-j}\delta\right\}.
\end{align*}
Apply Lemma \ref{lemHconEnde} to each set in the above union, we have  
\begin{align*}
\mathcal{H}^{1+s}_{\infty}(E_2)&\leq \sum_{j\geq 0: 2^{j+1}\delta\leq 1}128\delta^2\min\left\{\frac{2^{j+1}\delta}{a},\frac{2^{-j}\delta}{b}\right\}^{s-1}\\
&\ll \delta^{1+s}\left[a^{1-s}\sum_{j\geq0: 2^{j+1}\delta\leq 1}2^{(j+1)(s-1)}+b^{1-s}\sum_{j\geq 0: 2^{j+1}\delta\leq 1} 2^{j(1-s)}\right]\\
&\ll \delta^{1+s}\left[a^{1-s}+b^{1-s}\delta^{s-1}\right]\\
&=\delta^{1+s}a^{1-s}+\delta^{2s}b^{1-s}.
\end{align*}
Here the constant in each ``$\ll$'' depends only on $s$.
To estimate $\mathcal{H}^{1+s}_{\infty}(E_3)$, note that 
\begin{align*}
E_3&=\bigcup_{j\geq0: 2^{j+1}\delta\leq 1}\left\{(x,y)\in [0,1]^2: 2^j\delta\leq \|by\|<2^{j+1}\delta, \|ax\|\|by\|<\delta^2\right\}\\
&\subseteq \bigcup_{j\geq 0: 2^{j+1}\delta\leq 1}\left\{(x,y)\in [0,1]^2: \|by\|<2^{j+1}\delta, \|ax\|<2^{-j}\delta\right\}.
\end{align*}
Then it follows from a similar argument as for $\mathcal{H}^{1+s}(E_2)$ that 
\[\mathcal{H}^{1+s}_{\infty}(E_3)\ll \delta^{1+s}b^{1-s}+\delta^{2s}a^{1-s}.\]

Combining the above estimates together, we obtain that 
\begin{align*}
\mathcal{H}^{1+s}_{\infty}(E_{a,b}(\delta))&\ll \delta^{1+s}\max\{a,b\}^{1-s}+\delta^{1+s}a^{1-s}+\delta^{2s}b^{1-s}+\delta^{1+s}b^{1-s}+\delta^{2s}a^{1-s}\\
&\ll \delta^{2s}(a^{1-s}+b^{1-s}),
\end{align*}
where the last inequality holds since $0<s<1$ and $\delta\in (0,1)$.
\end{proof}

Concerning the set $F_{a,b}(\varrho,\eta)$, we prove the following covering property, which is needed in the proof of part (ii) of Theorem \ref{thm1HMeas}.
\begin{lem}\label{corcovering}
The set $F_{a,b}(\varrho,\eta)$ can be covered by at most 
\[12\left[{\rm gcd}(a,b)+2ab\max\left\{\frac{\varrho}{a},\frac{\eta}{b}\right\}\right]\]
many intervals of length $2\min\left\{\frac{\varrho}{a},\frac{\eta}{b}\right\}$.
\end{lem}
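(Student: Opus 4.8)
The plan is to count the points $x\in[0,1]$ satisfying the two conditions $\|ax\|<\varrho$ and $\|bx\|<\eta$ by first understanding the structure of the set $\{x:\|ax\|<\varrho\}$, then intersecting with $\{x:\|bx\|<\eta\}$, and finally bounding how many intervals of the target length $2\min\{\varrho/a,\eta/b\}$ are needed. First I would observe that $F_{a,b}(\varrho,\eta)$ is contained in the union of intervals $B(k/a,\varrho/a)$ over $k=0,\dots,a$ intersected with the union of intervals $B(\ell/b,\eta/b)$ over $\ell=0,\dots,b$. So the set is a union of the nonempty intersections $B(k/a,\varrho/a)\cap B(\ell/b,\eta/b)$, and the real task is to count the pairs $(k,\ell)$ for which these two intervals overlap.

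Let me write $g=\gcd(a,b)$ and $a=g\alpha$, $b=g\beta$ with $\gcd(\alpha,\beta)=1$. The centers are $k/a$ and $\ell/b$, and their difference is $(k b-\ell a)/(ab)=(k\beta-\ell\alpha)/(ab/g)$. Two intervals overlap precisely when $|k/a-\ell/b|<\varrho/a+\eta/b$, i.e.\ when the integer $m:=k\beta-\ell\alpha$ satisfies $|m|<(ab/g)(\varrho/a+\eta/b)=\beta\varrho+\alpha\eta\cdot(\text{appropriate factor})$; more precisely $|m|/(ab/g)<\varrho/a+\eta/b$, so $|m|<b\varrho/g+a\eta/g\le 2\max\{b\varrho,a\eta\}/g=(2ab/g)\max\{\varrho/a,\eta/b\}$. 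For each admissible value of $m$ in this range, since $\gcd(\alpha,\beta)=1$ the congruence $k\beta\equiv m\pmod{\alpha}$ pins down $k$ modulo $\alpha$, so the number of valid $k\in\{0,\dots,a\}=\{0,\dots,g\alpha\}$ is at most $g+1$. Multiplying, the number of overlapping pairs is at most $(g+1)$ times the number of admissible $m$, and I would bound the count of admissible integers $m$ by $1+2\cdot(2ab/g)\max\{\varrho/a,\eta/b\}$, giving roughly $(g+1)(1+4ab\max\{\varrho/a,\eta/b\}/g)$ overlapping pairs.

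Once the number of overlapping pairs is controlled, the covering count follows: each overlap $B(k/a,\varrho/a)\cap B(\ell/b,\eta/b)$ is an interval whose length is at most $2\min\{\varrho/a,\eta/b\}$, so it is covered by a single interval of that length. Thus the number of covering intervals is at most the number of overlapping pairs. Expanding the product $(g+1)(1+4ab\max\{\varrho/a,\eta/b\}/g)=g+1+4ab\max\{\varrho/a,\eta/b\}+4ab\max\{\varrho/a,\eta/b\}/g$ and using $g\ge1$ together with crude bounds ($g+1\le 2g$, $1\le g$, etc.), I would absorb everything into a bound of the form $12[\,g+2ab\max\{\varrho/a,\eta/b\}\,]$, matching the stated constant. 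The main obstacle I anticipate is the clean counting of overlapping pairs: I must correctly handle the boundary indices $k=0,a$ and $\ell=0,b$ (periodicity on $[0,1]$ slightly overcounts endpoints), verify that the $\gcd$ argument genuinely gives at most $g+1$ values of $k$ per residue class induced by each $m$, and make sure the range bound on $m$ is stated symmetrically so that the factor $\max\{\varrho/a,\eta/b\}$ appears rather than an asymmetric expression. The final collapse of constants into the clean numeral $12$ is routine once the pair count is pinned down.
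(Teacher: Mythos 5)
Your proposal is correct, and it reaches the stated bound by a genuinely different route from the paper. Both arguments start identically, reducing the problem to counting pairs $(k,\ell)$ with $B(k/a,\varrho/a)\cap B(\ell/b,\eta/b)\neq\emptyset$, i.e.\ with $\left|\frac{k}{a}-\frac{\ell}{b}\right|<\frac{\varrho}{a}+\frac{\eta}{b}$. From there the paper splits into three regimes according to the size of $\theta=\varrho+a\eta/b$ (assuming $a<b$), treating the smallest regime by the coprimality of $a'=a/g$ and $b'=b/g$, the largest by a trivial count, and the intermediate one by bounding $\#\{\ell:\|a\ell/b\|<\theta\}$ via the Erd\H{o}s--Tur\'an inequality. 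You instead parametrize the overlapping pairs by the integer $m=k\beta-\ell\alpha$ (your $\alpha,\beta$ are the paper's $a',b'$): the overlap condition becomes $|m|<(b\varrho+a\eta)/g$, and for each fixed $m$ the solutions of $k\beta-\ell\alpha=m$ form a single coset $(k_0+t\alpha,\ell_0+t\beta)$, so the constraint $0\le k\le g\alpha$ leaves at most $g+1$ pairs; multiplying by the at most $1+(4ab/g)\max\{\varrho/a,\eta/b\}$ admissible values of $m$ gives $2g+8ab\max\{\varrho/a,\eta/b\}$, comfortably inside the stated $12[g+2ab\max\{\cdot\}]$. Your $m=0$ case is exactly the paper's regime $\theta\le 1/b'$, but your uniform treatment of all $m$ replaces the exponential-sum machinery entirely (and also absorbs the paper's separate $a=b$ case). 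What your approach buys is a fully elementary, case-free argument with a better constant; what the paper's buys is a template that would survive in settings where the relevant points are not rationals with a fixed denominator, so that the discrepancy bound cannot be replaced by exact lattice-point counting --- but for this lemma that generality is not needed. The only points to tighten in a write-up are the ones you already flagged: the count of integers $m$ in $(-M,M)$ is at most $2M+1$, and the box constraints on $k$ and $\ell$ only ever decrease the $g+1$ count per value of $m$.
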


In the proof of Lemma \ref{corcovering}, we will make use of the following version of Erd\H{o}s-Tur{\'a}n inequality. Let $\{u_n\}_{n=1}^N$ be a sequence of  $N$ real numbers.  Let $\alpha,\beta\in \R$ with $\alpha< \beta< \alpha+1$. The discrepancy of  $\{u_n\}_{n=1}^N$ is defined by
\[D(N;\alpha,\beta)=\#\{1\leq n\leq N: u_n\in(\alpha,\beta) (\bmod1)\}-(\beta-\alpha)N.\]
Erd\H{o}s-Tur{\'a}n inequality gives a very useful upper bound for the discrepancy.
\begin{lem}\label{eqETinq}\cite[Chapter 1, Theorem 1]{Montgomery94}
For each $K\in\N$, 
\[|D(N;\alpha,\beta)|\leq \frac{N}{K+1}+2\sum_{k=1}^Kc_k\left|\sum_{n=1}^Ne^{2\pi i u_n k}\right|,\]
where 
\[c_k=\frac{1}{K+1}+\min\left\{\beta-\alpha, \frac{1}{\pi k}\right\}.\]
\end{lem}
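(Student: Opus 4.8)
The plan is to sandwich the $1$-periodic indicator function of $(\alpha,\beta)$ between two trigonometric polynomials of degree at most $K$ and then evaluate both bounds at the points $u_1,\dots,u_N$. Write $\chi$ for the $1$-periodic extension of $\mathbbm{1}_{(\alpha,\beta)}$, so that $D(N;\alpha,\beta)=\sum_{n=1}^N\chi(u_n)-(\beta-\alpha)N$, and record that $\widehat{\chi}(0)=\beta-\alpha$ while for $k\neq0$
\[
\widehat{\chi}(k)=\int_\alpha^\beta e^{-2\pi ikt}\,dt,\qquad |\widehat{\chi}(k)|\leq\min\Bigl\{\beta-\alpha,\tfrac{1}{\pi|k|}\Bigr\}.
\]
The analytic input I would invoke is the Beurling--Selberg extremal construction: for every $K\in\N$ there exist real trigonometric polynomials $V^{+}_K,V^{-}_K$ of degree at most $K$ with $V^{-}_K(x)\leq\chi(x)\leq V^{+}_K(x)$ for all $x$, with $\int_0^1 V^{\pm}_K=(\beta-\alpha)\pm\frac{1}{K+1}$, and whose Fourier coefficients obey $|\widehat{V^{\pm}_K}(k)|\leq\frac{1}{K+1}+\min\{\beta-\alpha,\frac{1}{\pi|k|}\}=c_{|k|}$ for $k\neq0$.

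Granting these polynomials, the inequality follows by a direct computation. Set $S(k)=\sum_{n=1}^N e^{2\pi iku_n}$, so that $S(0)=N$ and $S(-k)=\overline{S(k)}$. From $\chi\leq V^{+}_K$ and the expansion $V^{+}_K=\sum_{|k|\leq K}\widehat{V^{+}_K}(k)e^{2\pi ik\,\cdot}$ I obtain
\[
\sum_{n=1}^N\chi(u_n)\leq\sum_{|k|\leq K}\widehat{V^{+}_K}(k)S(k)=\Bigl((\beta-\alpha)+\tfrac{1}{K+1}\Bigr)N+\sum_{1\leq|k|\leq K}\widehat{V^{+}_K}(k)S(k).
\]
Since $V^{+}_K$ is real, $\widehat{V^{+}_K}(-k)=\overline{\widehat{V^{+}_K}(k)}$, so pairing the terms indexed by $\pm k$ gives $\widehat{V^{+}_K}(k)S(k)+\widehat{V^{+}_K}(-k)S(-k)=2\operatorname{Re}\bigl(\widehat{V^{+}_K}(k)S(k)\bigr)\leq 2c_k|S(k)|$. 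Hence $D(N;\alpha,\beta)\leq\frac{N}{K+1}+2\sum_{k=1}^K c_k|S(k)|$. Running the identical argument with $V^{-}_K$ in place of $V^{+}_K$ yields the reverse bound $D(N;\alpha,\beta)\geq-\frac{N}{K+1}-2\sum_{k=1}^K c_k|S(k)|$, and the two together give the asserted estimate on $|D(N;\alpha,\beta)|$.

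The real content, and the step I expect to be the main obstacle, is the construction of $V^{\pm}_K$ meeting the three properties simultaneously. I would build them from Beurling's extremal function $B$, the entire function of exponential type $2\pi$ satisfying $B(x)\geq\operatorname{sgn}(x)$ on $\R$ and $\int_\R(B(x)-\operatorname{sgn}(x))\,dx=1$. Writing $\mathbbm{1}_{[\alpha,\beta]}(x)=\tfrac12\bigl(\operatorname{sgn}(x-\alpha)-\operatorname{sgn}(x-\beta)\bigr)$, the function $\tfrac12\bigl(B(\delta(x-\alpha))+B(\delta(\beta-x))\bigr)$ with $\delta=K+1$ majorizes $\mathbbm{1}_{[\alpha,\beta]}$ on $\R$, and the analogous combination built from the minorant $-B(-\delta\,\cdot)$ of $\operatorname{sgn}$ minorizes it; periodizing via Poisson summation converts each into a trigonometric polynomial of degree at most $K$. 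The delicate points are: (a) verifying that the pointwise majorization survives periodization; (b) showing that Beurling's unit-mass normalization, after the scaling by $\delta=K+1$, makes the integral gap exactly $\frac{1}{K+1}$ (each scaled sign term contributes $\tfrac12\cdot\tfrac{1}{\delta}$); and (c) transferring the bound $|\widehat{\chi}(k)|\leq\frac{1}{\pi|k|}$ to $V^{\pm}_K$ so as to produce the precise coefficient estimate $c_{|k|}$. A more elementary substitute, namely convolving $\chi$ with a Fej\'er kernel of degree $K$, would furnish majorants and minorants of the correct qualitative shape but with larger absolute constants; obtaining the sharp $c_k$ exactly as stated is what forces the use of the extremal-function machinery.
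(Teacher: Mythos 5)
The paper offers no proof of this lemma at all---it is imported verbatim from Montgomery \cite[Chapter 1, Theorem 1]{Montgomery94}---and your argument reconstructs exactly the proof given in that cited source: sandwich the periodized indicator $\chi$ between Beurling--Selberg majorant/minorant trigonometric polynomials $V_K^{\pm}$ of degree at most $K$ with $\int_0^1 V_K^{\pm}=(\beta-\alpha)\pm\frac{1}{K+1}$, evaluate at $u_1,\dots,u_N$, pair the $\pm k$ frequencies, and control the coefficients by $|\widehat{\chi}(k)|\leq\min\left\{\beta-\alpha,\frac{1}{\pi|k|}\right\}$ plus the $\frac{1}{K+1}$ loss (which follows from nonnegativity of $V_K^{+}-\chi$ and $\chi-V_K^{-}$). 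Your reduction from the existence of $V_K^{\pm}$ to the stated inequality is complete and correct, and the construction you defer is precisely the Selberg--Vaaler machinery Montgomery develops, so the proposal is correct and follows essentially the same route as the paper's reference.
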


\begin{proof}[Proof of Lemma \ref{corcovering}]
Notice that 
\begin{align*}
F_{a,b}(\varrho,\eta)&=\bigcup_{k=0}^{a}\bigcup_{\ell=0}^{b}\{x\in [0,1]: |ax-k|<\varrho, |bx-\ell|<\eta\}\\
&\subseteq \bigcup_{k=0}^{a}\bigcup_{\ell=0}^{b}\left[B\left(\frac{k}{a}, \frac{\varrho}{a}\right)\cap B\left(\frac{\ell}{b}, \frac{\eta}{b}\right)\right].
\end{align*}
Let $N_{a,b}(\varrho,\eta)$ be the number of pairs $(k,\ell)\in\Z^2$ with $0\leq k\leq a$ and $0\leq \ell\leq b$ such that 
\begin{equation}\label{eqintsein}
B\left(\frac{k}{a}, \frac{\varrho}{a}\right)\cap B\left(\frac{\ell}{b}, \frac{\eta}{b}\right)\neq \emptyset.	
\end{equation}
Since each set in \eqref{eqintsein} is an interval of length at most $2\min\left\{\frac{\varrho}{a},\frac{\eta}{b}\right\}$, hence to prove the lemma it suffices to prove that 
\begin{equation}\label{eqNabde}
N_{a,b}(\varrho,\eta)\leq 12\left[{\rm gcd}(a,b)+2ab\max\left\{\frac{\varrho}{a},\frac{\eta}{b}\right\}\right].
\end{equation}
When $a=b$, since $\varrho,\eta\in (0,1)$, we see that for each $0\leq k\leq a$ fixed, there exist at most three $\ell$'s satisfying \eqref{eqintsein}. Hence  $N_{a,b}(\varrho,\eta)\leq 3(a+1)$ and so the lemma holds. In the following, we assume that $a<b$; the other case that $a>b$ can be proved similarly.

Let $\theta$ be such that 
\begin{equation*}\label{eqtildedelta}
\frac{\theta}{a}=\frac{\varrho}{a}+\frac{\eta}{b}.
\end{equation*}
Then \eqref{eqintsein} holds if and only if 
\begin{equation}\label{eqanbninh}
\left|\frac{k}{a}-\frac{\ell}{b}\right|<\frac{\theta}{a}.
\end{equation}
Hence 
\begin{align*}
N_{a,b}(\varrho,\eta)=\#\left\{(k,\ell)\in \Z^2: 0\leq k\leq a, 0\leq \ell\leq b \text{ such that }\eqref{eqanbninh} \text{ holds} \right\}. 
\end{align*}
Let $g={\rm gcd}(a,b)$, $a'=\frac{a}{g}$ and $b'=\frac{b}{g}$. Then $b'>1$ and ${\rm gcd}(a',b')=1$.
Below we estimate $N_{a,b}(\varrho,\eta)$ in the three scenarios $\theta\leq \frac{1}{b'}$, $\theta\geq \frac{1}{2}$ and $\frac{1}{b'}<\theta< \frac{1}{2}$, separately.
  
Notice that 
\begin{equation}\label{eqkxinan}
\left|\frac{k}{a}-\frac{\ell}{b}\right|=\left|\frac{kb-\ell a}{ab}\right|=\left|\frac{kb'-\ell a'}{ab'}\right|.
\end{equation}
Hence if $\theta\leq \frac{1}{b'}$, then \eqref{eqanbninh} and \eqref{eqkxinan}  imply that $kb'=\ell a'$.
Since ${\rm gcd}(a', b')=1$, this further implies that $k=ta'$,  $\ell=tb'$ for some $t\in \Z$. Since $0\leq k\leq a$ and $0\leq \ell\leq b$, each such $t$  satisfies that $0\leq t\leq g$.
Therefore, when  $\theta\leq \frac{1}{b'}$, we have  
\begin{equation}\label{eqNnleq22g}
N_{a,b}(\varrho,\eta)\leq 1+g\leq 2g.
\end{equation}

Next we consider the case that $\theta\geq \frac{1}{2}$. This time,  for $k,\ell$ satisfying \eqref{eqanbninh}, we have 
\[\left| k-\frac{a\ell}{b}\right|<2\theta.\]
So for each $0\leq \ell\leq b$, there are at most $\lfloor2\theta\rfloor+2$ many $k$'s satisfying \eqref{eqanbninh}. Hence,
\begin{equation}\label{eqcasethe12}
N_{a,b}(\varrho,\eta)\leq (b+1)(\lfloor2\theta\rfloor+2)\leq 12b.
\end{equation}

Finally, we consider the case that $\frac{1}{b'}<\theta< \frac{1}{2}$. Since $\theta<\frac{1}{2}$,  for $k,\ell$ satisfying \eqref{eqanbninh}, we have 
\[\left| k-\frac{a\ell}{b}\right|<\frac{1}{2} \quad \text{ and }\quad \left\|\frac{a\ell}{b}\right\|<\theta.\]
Hence for each $\ell$ with $\left\|\frac{a\ell}{b}\right\|<\theta$ there is at most one $k$ satisfying \eqref{eqanbninh}.
Therefore,
\begin{align}
N_{a,b}(\varrho,\eta)&\leq \#\left\{0\leq \ell \leq b: \left\|\frac{a\ell}{b}\right\|<\theta\right\}\nonumber\\
&=\sum_{\ell=0}^{b}\chi_{\theta}\left(\frac{a\ell}{b}\right), \label{eqNntilded}
\end{align}
where $\chi_{\theta}(\cdot)$ denotes the characteristic function for the set $\left\{x\in \R: \|x\|<\theta\right\}$. Below we apply the Erd\H{o}s-Tur{\'a}n inequality (Lemma \ref{eqETinq}) to estimate \eqref{eqNntilded}. To this end, in  Lemma \ref{eqETinq} we take $\alpha=-\theta$, $\beta=\theta$, $N=b+1$,  $K=\lfloor \theta^{-1}\rfloor$, and $u_{\ell}=\frac{a\ell}{b}$ for $\ell=0, 1,\ldots, b$. Notice that $K<b'$, $\frac{1}{K+1}<\theta$, and for $k=1,\ldots, K$,
	\[c_k=\frac{1}{K+1}+\min\left\{\beta-\alpha, \frac{1}{\pi k}\right\}\leq 3\theta.\]
Let \[D(b+1;-\theta,\theta)=\sum_{\ell=0}^{b}\chi_{\theta}\left(\frac{a\ell}{b}\right)-2(b+1)\theta.\]
Then by Lemma \ref{eqETinq}, we have 
\begin{align}
|D(b+1;-\theta,\theta)|&\leq (b+1)\theta+6\theta\sum_{k=1}^K\left|\sum_{\ell=0}^{b}e^{2\pi i k \frac{a\ell}{b}}\right|\nonumber\\
&=(b+1)\theta+6\theta\sum_{k=1}^K\left|\sum_{\ell=0}^{b}e^{2\pi i k \frac{a'\ell}{b'}}\right|\nonumber\\
&= (b+1)\theta+6K\theta,\nonumber
\end{align}
where the last equality holds since $b'>1$ and $(a',b')=1$ and so $\sum_{\ell=0}^{b}e^{2\pi i k \frac{a'\ell}{b'}}=1$. Since $K<b'$, it follows that 
\[|D(b+1;-\theta,\theta)|\leq (b+1)\theta+6b'\theta\leq 8b\theta.\]
Therefore, we have 
\begin{equation*}\label{eqNndeltadleq}
N_{a,b}(\varrho,\eta)\leq |D(b+1;-\theta, \theta)|+2(b+1)\theta\leq 12b\theta.
\end{equation*}
This combining with \eqref{eqNnleq22g}-\eqref{eqcasethe12} yields that 
\[N_{a,b}(\varrho,\eta)\leq 12(g+b\theta).\]
Since 
\[b\theta=b\varrho+a\eta\leq 2ab\max\left\{\frac{\varrho}{a},\frac{\eta}{b}\right\},\]
we see that \eqref{eqNabde} holds and we complete the proof of the lemma.
\end{proof}

Recall that for $a,b\in \N$ and $\delta\in (0,1)$, $$F_{a,b}(\delta)=\left\{x\in [0,1]: \|ax\|\|bx\|<\delta^2\right\}.$$
Below we apply Lemma \ref{corcovering} to establish  an upper bound estimate for the Hausdorff content of the set $F_{a,b}(\delta)$, which is the key to prove part (ii) of Theorem \ref{thm1HMeas}.

\begin{lem}\label{lemHsinftyee}
Let $s\in (0,1)$. Then there exists a constant $C_2$ depending only on $s$ such that 
\[\mathcal{H}^s_{\infty}(F_{a,b}(\delta))\leq C_2\left[a\left(\frac{\delta^2}{a}\right)^s+{\rm gcd}(a,b)\left(\frac{\delta}{\sqrt{ab}}\right)^s+b\left(\frac{\delta^2}{b}\right)^s\right].\]
\end{lem}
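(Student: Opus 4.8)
The plan is to mimic the three-part decomposition used in the proof of Lemma~\ref{lemHconEabD}, but now with the one-dimensional covering count supplied by Lemma~\ref{corcovering} in place of the two-dimensional Hausdorff content bound of Lemma~\ref{lemHconEnde}. Write $F_{a,b}(\delta)=F_1\cup F_2\cup F_3$, where
\[F_1=\left\{x\in [0,1]: \|ax\|<\delta,\ \|bx\|<\delta\right\},\]
\[F_2=\left\{x\in [0,1]: \|ax\|\geq\delta,\ \|ax\|\|bx\|<\delta^2\right\},\]
\[F_3=\left\{x\in [0,1]: \|bx\|\geq\delta,\ \|ax\|\|bx\|<\delta^2\right\}.\]
For $F_1$ I would apply Lemma~\ref{corcovering} directly with $\varrho=\eta=\delta$: it gives a cover by $\ll\,\bigl[\gcd(a,b)+2ab\max\{\delta/a,\delta/b\}\bigr]=\ll\,[\gcd(a,b)+2\delta\max\{a,b\}]$ intervals of length $2\min\{\delta/a,\delta/b\}=2\delta/\max\{a,b\}$. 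Summing the $s$-th powers of these lengths produces a contribution of the right shape, the key being that $\gcd(a,b)\bigl(\delta/\max\{a,b\}\bigr)^s\leq \gcd(a,b)\bigl(\delta/\sqrt{ab}\bigr)^s$ (since $\max\{a,b\}\geq\sqrt{ab}$), which is exactly the middle term of the claimed bound.

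For $F_2$ and $F_3$ I would run the same dyadic slicing as in Lemma~\ref{lemHconEabD}. For $F_2$, decompose over $j\geq 0$ with $2^{j+1}\delta\leq 1$ into the sets where $2^j\delta\leq\|ax\|<2^{j+1}\delta$, so that $\|bx\|<2^{-j}\delta$; then apply Lemma~\ref{corcovering} to each slice with $\varrho=2^{j+1}\delta$ and $\eta=2^{-j}\delta$. Here one must track both how many intervals each slice contributes and their common length $2\min\{2^{j+1}\delta/a,\ 2^{-j}\delta/b\}$, raise the length to the power $s$, and sum the geometric series in $j$. The $\gcd(a,b)$ term in the covering count, summed against the $j$-dependent interval length, will again be controlled by the $\gcd(a,b)(\delta/\sqrt{ab})^s$ term — this is the place where the geometric mean $\sqrt{ab}$ naturally appears, since the worst slice is near $2^{j}\sim\sqrt{b/a}$ where the two bounds $2^{j+1}\delta/a$ and $2^{-j}\delta/b$ balance. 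The $F_3$ estimate is symmetric, interchanging the roles of $a$ and $b$ (and of $\varrho,\eta$).

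I expect the main obstacle to be bookkeeping the $\gcd$ term across the dyadic sum in $F_2$ (and $F_3$): unlike in Lemma~\ref{lemHconEabD}, where only an $ab\max\{\cdot\}$ factor appears, here each slice carries an additive $\gcd(a,b)$ contribution, and after multiplying by the $s$-th power of the slice length $\bigl(\min\{2^{j+1}\delta/a,\,2^{-j}\delta/b\}\bigr)^s$ one must check that the resulting sum over $j$ is $\ll \gcd(a,b)(\delta/\sqrt{ab})^s$ with constant depending only on $s$. The subtlety is that $\min\{2^{j+1}\delta/a,\,2^{-j}\delta/b\}$ is maximized at the crossover index $j^\ast$ with $2^{j^\ast}\approx\sqrt{b/a}$, where its value is $\approx\delta/\sqrt{ab}$, and the sum of $s$-th powers on either side of $j^\ast$ forms two convergent geometric series (convergent precisely because $0<s<1$), yielding a bound $\ll\gcd(a,b)(\delta/\sqrt{ab})^s$. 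Meanwhile the $2ab\max\{\varrho/a,\eta/b\}$ part of the covering count, after weighting by the length to the power $s$, collapses to contributions of the form $\delta^{2s}a^{1-s}$ and $\delta^{2s}b^{1-s}$ exactly as in Lemma~\ref{lemHconEabD}; these match $a(\delta^2/a)^s$ and $b(\delta^2/b)^s$. Once all three pieces are assembled and the redundant terms absorbed using $0<s<1$ and $\delta\in(0,1)$, the stated bound with a constant $C_2=C_2(s)$ follows.
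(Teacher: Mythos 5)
Your proposal is correct and follows essentially the same route as the paper: the identical decomposition $F_{a,b}(\delta)=F_1\cup F_2\cup F_3$, Lemma~\ref{corcovering} applied to $F_1$ directly and to dyadic slices of $F_2$, $F_3$, with the $\gcd$ contribution summed as two geometric series meeting at the crossover where $2^{j+1}\delta/a$ and $2^{-j}\delta/b$ balance at value $\asymp\delta/\sqrt{ab}$. (Only a cosmetic slip: for $F_2$ the crossover index satisfies $2^{j}\approx\sqrt{a/b}$ rather than $\sqrt{b/a}$, which does not affect the argument.)
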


\begin{proof}
We decompose $F_{a,b}(\delta)$ as $F_{a,b}(\delta)=F_1\cup F_2\cup F_3$,
where 
\[F_1=\{x\in [0,1]: \|ax\|<\delta, \|bx\|<\delta\},\]
\[F_2=\left\{x\in [0,1]: \|ax\|\geq \delta, \|ax\|\|bx\|<\delta^2\right\},\]
\[F_3=\left\{x\in [0,1]: \|bx\|\geq \delta, \|ax\|\|bx\|<\delta^2\right\}.\]
In the following, we estimate the $s$-dimensional Hausdorff content of $F_i$ ($i=1,2,3$) separately. To ease notation, let $g={\rm gcd}(a,b)$.

{\bf An upper bound for $\mathcal{H}^s_{\infty}(F_1)$}. By Lemma \ref{corcovering},  we have 
\begin{align}\label{eqHsWF1}
\mathcal{H}^s_{\infty}(F_1)&\leq 12\left(g+2ab\max\left\{\frac{\delta}{a},\frac{\delta}{b}\right\}\right)\times \left(2\min\left\{\frac{\delta}{a}, \frac{\delta}{b}\right\}\right)^s\\
&\ll\left(g+\frac{ab\delta}{\min\{a,b\}}\right)\frac{\delta^s}{\max\{a,b\}^s},\nonumber
\end{align}
where the constant in $\ll$ depends only on $s$ and is independent of $a,b$ and $\delta$.

{\bf An  upper bound for $\mathcal{H}_{\infty}^{s}(F_2)$}.  Let $J=\{j\geq 0: 2^{j+1}\delta<1\}$, $J_1=\{j\geq 0: 2^{2j+1}\leq a/b\}$ and $J_2=\{j\geq 0: 2^{2j+1}> a/b\}$. Note that 
\begin{align*}
F_2&=\bigcup_{j\in J}\left\{x\in [0,1]: 2^j\delta\leq \|ax\|<2^{j+1}\delta, \|ax\|\|bx\|<\delta^2\right\}\\
&\subseteq \bigcup_{j\in J}\left\{x\in [0,1]: \|ax\|<2^{j+1}\delta, \|bx\|<2^{-j}\delta\right\}.
\end{align*}
Apply Lemma \ref{corcovering} to each set in the above union, we have
\begin{align}
\mathcal{H}_{\infty}^s(F_2)&\leq \sum_{j\in J}12\left(g+2ab\max\left\{\frac{2^{j+1}\delta}{a},\frac{2^{-j}\delta}{b}\right\}\right)\times \left(2\min\left\{\frac{2^{j+1}\delta}{a},\frac{2^{-j}\delta}{b}\right\}\right)^s\nonumber\\
\begin{split}
&\leq \sum_{j\in J_1} 12\left(g+2a\cdot 2^{-j}\delta\right)\times \left(2\cdot\frac{2^{j+1}\delta}{a}\right)^s \\ &\qquad+\sum_{j\in J\cap J_2}  12\left(g+2b\cdot2^{j+1}\delta\right)\times \left(2\cdot\frac{2^{-j}\delta}{b}\right)^s
\end{split}\nonumber\\
\begin{split}
&\ll g\left(\frac{\delta}{a}\right)^s\sum_{j\in J_1}2^{js}+a\delta\left(\frac{\delta}{a}\right)^s\sum_{j\in J_1} 2^{(s-1)j} \\ &\qquad+g\left(\frac{\delta}{b}\right)^s\sum_{j\in J\cap J_2}2^{-js}+b\delta\left(\frac{\delta}{b}\right)^s\sum_{j\in J\cap J_2}2^{(1-s)j}
\end{split}\label{eqHasMF2}
\\
&\ll  g\left(\frac{\delta}{a}\right)^s\left(\frac{a}{b}\right)^{\frac{s}{2}}+a\delta\left(\frac{\delta}{a}\right)^s+g\left(\frac{\delta}{b}\right)^s\left(\frac{b}{a}\right)^{\frac{s}{2}}+b\delta\left(\frac{\delta}{b}\right)^s\delta^{s-1}\nonumber\\
&\ll g\left(\frac{\delta}{\sqrt{ab}}\right)^s+a\delta\left(\frac{\delta}{a}\right)^s+b\left(\frac{\delta^2}{b}\right)^s.\nonumber
\end{align}
Again the constant in each of the above ``$\ll$'' depends only on $s$.

{\bf An upper bound for $\mathcal{H}_{\infty}^{s}(F_3)$}. Similar to the case for $F_2$, we have 
\begin{align*}
F_3&=\bigcup_{j\in J}\left\{x\in [0,1]: 2^j\delta\leq \|bx\|<2^{j+1}\delta, \|ax\|\|bx\|<\delta^2\right\}\\
&\subseteq \bigcup_{j\in J}\left\{x\in [0,1]: \|bx\|<2^{j+1}\delta, \|ax\|<2^{-j}\delta\right\}.
\end{align*}
Then a similar argument as above yields that 
\begin{align*}
\mathcal{H}_{\infty}^s(F_3)\ll g\left(\frac{\delta}{\sqrt{ab}}\right)^s+b\delta\left(\frac{\delta}{b}\right)^s+a\left(\frac{\delta^2}{a}\right)^s.
\end{align*}

Combining the above upper bounds for $\mathcal{H}^s_{\infty}(F_i)$ ($i=1,2,3$), we obtain that 
\begin{align*}
\begin{split}
\mathcal{H}_{\infty}^s(F_{a,b}(\delta))&\ll\left(g+\frac{ab\delta}{\min\{a,b\}}\right)\frac{\delta^s}{\max\{a,b\}^s}+g\left(\frac{\delta}{\sqrt{ab}}\right)^s+a\delta\left(\frac{\delta}{a}\right)^s+b\left(\frac{\delta^2}{b}\right)^s \\ &\qquad +g\left(\frac{\delta}{\sqrt{ab}}\right)^s+b\delta\left(\frac{\delta}{b}\right)^s+a\left(\frac{\delta^2}{a}\right)^s
\end{split}\\
&\ll a\left(\frac{\delta^2}{a}\right)^s+g\left(\frac{\delta}{\sqrt{ab}}\right)^s+b\left(\frac{\delta^2}{b}\right)^s,
\end{align*}
where in the last $\ll$ we have used the assumption that $s\in (0,1)$. This completes the proof of the lemma. 
\end{proof}

Concerning the Lebesgue measure of $F_{a,b}(\delta)$, we have the following result.

\begin{lem}\label{lemLebesgue}
There exists an absolute constant $C_3$ such that 
\[\mathcal{L}(F_{a,b}(\delta))\leq C_3\left[{\rm gcd}(a,b)\left(\frac{\delta^2}{ab}\right)^{\frac{1}{2}}+\delta^2\log\left(\frac{1}{\delta}\right)\right].\]
\end{lem}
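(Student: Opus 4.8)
The plan is to run the same decomposition and covering scheme used for Lemma~\ref{lemHsinftyee}, but now bounding the Lebesgue measure of each piece by (number of covering intervals) $\times$ (length of each interval), which is exactly the data delivered by Lemma~\ref{corcovering}, since for a subset of $[0,1]$ the Lebesgue measure is at most the total length of any cover. Write $F_{a,b}(\delta)=F_1\cup F_2\cup F_3$ with $F_1=\{x\in[0,1]:\|ax\|<\delta,\ \|bx\|<\delta\}$ and with $F_2,F_3$ exactly as in the proof of Lemma~\ref{lemHsinftyee}, and set $g=\mathrm{gcd}(a,b)$. For $F_1$, a single application of Lemma~\ref{corcovering} produces a cover by $\ll g+ab\delta/\min\{a,b\}$ intervals of length $\ll\delta/\max\{a,b\}$; multiplying and using $\min\{a,b\}\max\{a,b\}=ab$ together with $\max\{a,b\}\ge\sqrt{ab}$ gives $\mathcal L(F_1)\ll g(\delta^2/ab)^{1/2}+\delta^2$, which already sits inside the target bound and, crucially, involves no logarithm.

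For $F_2$ I would use the dyadic slicing $F_2\subseteq\bigcup_{j\in J}\{x:\|ax\|<2^{j+1}\delta,\ \|bx\|<2^{-j}\delta\}$ with $J=\{j\ge0:2^{j+1}\delta<1\}$, split $J$ according to whether $2^{2j+1}\le a/b$ (so that the shorter side, hence the interval length $2\min$, is governed by the $a$-condition) or not, and apply Lemma~\ref{corcovering} slice by slice. The essential observation is that in each slice the product $\min\cdot\max$ of the two half-widths equals $2\delta^2/(ab)$ identically, so the covering count times length splits into a $\mathrm{gcd}$-term $\ll g\min$ and a \emph{balanced term} equal to a fixed constant multiple of $\delta^2$, independent of $j$. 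The treatment of $F_3$ is identical after swapping the roles of $a$ and $b$.

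The key difference from Lemma~\ref{lemHsinftyee}, and the only real subtlety, is what becomes of these balanced terms. For $s<1$ they carried a convergent factor $2^{(s-1)j}$ or $2^{(1-s)j}$, but at the Lebesgue (i.e.\ $s=1$) level that factor disappears and one is left summing a quantity $\asymp\delta^2$ over all $j\in J$. Since $\#J\le\log_2(1/\delta)$, this is precisely where $\delta^2\log(1/\delta)$ is produced, so I would isolate these terms and bound their total by $\delta^2\cdot\#J\ll\delta^2\log(1/\delta)$, rather than attempting any geometric-series estimate. The $\mathrm{gcd}$-terms, by contrast, still assemble into geometric series in $j$ dominated by their extreme terms: $\sum_j 2^{j}$ over $\{2^{2j+1}\le a/b\}$ is $\ll\sqrt{a/b}$ and symmetrically $\sum_j 2^{-j}$ over the complementary range is $\ll\sqrt{b/a}$, so both collapse to $g(\delta^2/ab)^{1/2}$.

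Finally I would add the three bounds. The point that most deserves care is verifying that the resulting constant $C_3$ is \emph{absolute}: every constant entering the argument comes either from Lemma~\ref{corcovering} (absolute), from a geometric sum controlled by its largest term with an absolute ratio, or from $\#J\ll\log(1/\delta)$; since the parameter $s$ no longer appears anywhere, there is no $s$-dependence, in contrast with the constants $C_1$ and $C_2$ of the previous lemmas. I expect no genuine obstacle beyond this bookkeeping, the main conceptual content being the clean separation of the $\mathrm{gcd}$-contribution from the logarithmic balanced contribution.
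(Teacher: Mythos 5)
Your proposal is correct and follows essentially the same route as the paper: the paper likewise reuses the decomposition $F_1\cup F_2\cup F_3$ and the covering Lemma~\ref{corcovering} from the proof of Lemma~\ref{lemHsinftyee}, sets $s=1$, keeps the $\mathrm{gcd}$-terms as geometric series collapsing to $g\delta/\sqrt{ab}$, and bounds the balanced terms by $(\#J)\delta^2\ll\delta^2\log(1/\delta)$. The only caveat, shared with the paper's own write-up, is that absorbing the bare $\delta^2$ from $F_1$ into $\delta^2\log(1/\delta)$ implicitly requires $\delta$ bounded away from $1$ (harmless in the intended application, where $\psi(n)\to 0$).
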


\begin{proof}
The proof is a slight modification of that of Lemma \ref{lemHsinftyee}, since most part of the proof of Lemma \ref{lemHsinftyee} still works when $s=1$. To see this, let $F_i$ $(i=1, 2, 3)$ be given as in Lemma $\ref{lemHsinftyee}$ so that $F_{a,b}(\delta)=F_1\cup F_2\cup F_3$. Then letting $s=1$ in \eqref{eqHsWF1} yields that  
\[\mathcal{L}(F_1)\leq 48\left(\frac{g\delta}{\max\{a,b\}}+\delta^2\right),\]
where $g={\rm gcd}(a,b)$. As for $F_2$, notice that when $s=1$, the third line in the estimate of $\mathcal{H}^s_{\infty}(F_2)$ (cf. \eqref{eqHasMF2})  becomes 
\begin{align*}
\mathcal{L}(F_2)&\ll \frac{g\delta}{a}\sum_{j\in J_1}2^j+\frac{g\delta}{b}\sum_{j\in J\cap J_2}2^{-j}+(\#J)\delta^2\\
&\ll \frac{g\delta}{a}\left(\frac{a}{b}\right)^{\frac{1}{2}}+\frac{g\delta}{b}\left(\frac{b}{a}\right)^{\frac{1}{2}}+\delta^2\log\left(\frac{1}{\delta}\right)\\
&\ll \frac{g\delta}{\sqrt{ab}}+\delta^2\log\left(\frac{1}{\delta}\right),
\end{align*}
where all constants in ``$\ll$'' are absolute. By a a similar argument, the same bound holds for $\mathcal{L}(F_3)$. Hence the lemma  follows by combining these upper bounds for $\mathcal{L}(F_i)$, $i=1,2,3$.
\end{proof}

The following simple observation plays an important role in  our treatment of $W_{\mathcal{A},\mathcal{B}}(\psi)$ (cf. \eqref{eqDefWABD}).  

\begin{lem}\label{LemSubset}
Let $a,b$ be positive integers and let $g={\rm gcd}(a,b)$. Then for  $\eta\in (0,1)$, we have 
\[\{x\in [0,1]: \|gx\|<g\eta\}\subseteq \{x\in [0,1]: \|ax\|<a\eta,\|bx\|<b\eta\}.\]
\end{lem}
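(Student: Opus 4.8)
The plan is to prove the set inclusion in Lemma~\ref{LemSubset} directly, by showing that if $x$ lies in the left-hand set then each of the two conditions defining the right-hand set is automatically satisfied. Write $a = g a'$ and $b = g b'$ with $g = {\rm gcd}(a,b)$, so that $a', b'$ are positive integers. The key elementary fact I would use is that for any integer $m$ and any real $t$, one has $\|mt\| \leq |m|\,\|t\|$; this follows because if $p$ is the nearest integer to $t$, then $mp$ is an integer and $\|mt\| \leq |mt - mp| = |m|\,|t-p| = |m|\,\|t\|$.

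First I would take an arbitrary $x$ with $\|gx\| < g\eta$. Applying the elementary inequality with $m = a'$ and $t = gx$ gives $\|ax\| = \|a'(gx)\| \leq a'\,\|gx\| < a' \cdot g\eta = a\eta$, which is exactly the first required condition. Symmetrically, applying it with $m = b'$ and $t = gx$ gives $\|bx\| = \|b'(gx)\| \leq b'\,\|gx\| < b' \cdot g\eta = b\eta$, the second required condition. Hence $x$ belongs to the right-hand set, and the inclusion follows.

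There is essentially no main obstacle here: the statement is a short consequence of the submultiplicativity-type inequality $\|mt\| \leq |m|\,\|t\|$ for integer $m$, combined with the factorizations $a = g a'$, $b = g b'$. The only point requiring a moment's care is verifying that inequality cleanly, and noting that it is the integrality of the multiplier $m$ (here $a'$ and $b'$) that makes $mp$ an integer and thus legitimizes the bound on the nearest-integer distance. Everything else is a direct substitution, so the proof should be just a few lines.
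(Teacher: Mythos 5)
Your proposal is correct and follows essentially the same route as the paper: both reduce the claim to the inequality $\|mt\|\leq |m|\,\|t\|$ for integer $m$ and then apply it with $t=gx$ and $m=a/g$, $m=b/g$. The only (immaterial) difference is that you justify this inequality via the nearest integer to $t$ directly, whereas the paper deduces it from the subadditivity $\|x+y\|\leq\|x\|+\|y\|$.
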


\begin{proof}
According to the definition of $\|\cdot\|$, for any $x,y\in\R$, we have 
\[\|x+y\|\leq \|x\|+\|y\|.\]
As a consequence, for any $x\in \R$ and every $n\in\N$,
\[\|nx\|\leq n\|x\|.\]
Therefore, for any $x\in [0,1]$ with $\|gx\|<g\eta$, we have 
\[\|ax\|=\left\|\frac{a}{g}gx\right\|\leq \frac{a}{g}\|gx\|<a\eta,\quad \|bx\|=\left\|\frac{b}{g}gx\right\|\leq \frac{b}{g}\|gx\|<b\eta.\]
From this the lemma follows. 
\end{proof}

\section{Proof of Theorems \ref{thm1Haus}-\ref{thm1HMeas}}\label{Sec3}

We first give the proof of Theorem \ref{thm1HMeas}. Then we apply Theorem \ref{thm1HMeas} to deduce Theorem \ref{thm1Haus}.

\begin{proof}[Proof of Theorem \ref{thm1HMeas}]
For $n\in\N$, let $$E_n=\left\{(x,y)\in [0,1]^2: \|a_nx\|\|b_ny\|<\psi(n)\right\}.$$
Then $W_{\mathcal{A},\mathcal{B}}^{\times}(\psi)=\limsup_{n\to\infty}E_n$. By Lemma \ref{lemHconEabD},  there is a constant $C_1$ which is independent of $n$ such that
\[\mathcal{H}^{1+s}_{\infty}(E_n)\leq C_1\left[a_n\left(\frac{\psi(n)}{a_n}\right)^s+b_n\left(\frac{\psi(n)}{b_n}\right)^s\right].\]
It then follows from the Borel-Cantelli lemma that $\mathcal{H}^{1+s}_{\infty}(W_{\mathcal{A},\mathcal{B}}^{\times}(\psi))=0$ and thus  $\mathcal{H}^{1+s}(W_{\mathcal{A},\mathcal{B}}^{\times}(\psi))=0$. This proves part (i) of Theorem \ref{thm1HMeas}. 

Since $W_{\mathcal{A},\mathcal{B}}(\psi)=\limsup_{n\to\infty}F_n$, where 
\begin{equation*}\label{eqDefFn}
F_n=\{x\in [0,1]: \|a_nx\|\|b_nx\|<\psi(n)\},
\end{equation*}
 the part (ii) of Theorem \ref{thm1HMeas} follows similarly by applying Lemma \ref{lemHsinftyee} and again the Borel-Cantelli lemma.
\end{proof}

As a direct consequence of  Theorem \ref{thm1HMeas}, we see that the Hausdorff dimension of $W_{\mathcal{A},\mathcal{B}}^{\times}(\psi)$ is bounded above by $\min\{1+\lambda,2\}$, and that of  $W_{\mathcal{A},\mathcal{B}}(\psi)$ is bounded above by $\min\{\gamma,1\}$, where $\lambda$ and $\gamma$ are defined in \eqref{dimsstartimes} and \eqref{dimsstar}, respectively. To show that these are also lower bounds, we need make use of a result  about Hausdorff dimension in one dimensional Diophantine approximation.

Let $\phi:\N\to \R_{\geq0}$ be a nonnegative function. Set 
\[W(\phi)=\{x\in [0,1]: \|qx\|<\phi(q) \text{ for i.m. }q\in \N\}.\]
The size of $W(\phi)$  is a core subject of study in metric Diophantine approximation. Under the assumption that $\phi$ is monotonically non-increasing, Jarn{\'i}k \cite{Jarnik31} proved that  the Hausdorff measure of $W(\phi)$ satisfies a zero-full law according to a series converges or diverges, and so obtained the Hausdorff dimension of  $W(\phi)$. For general $\phi$ without the monotonicity assumption, Hinokuma and Shiga \cite{HS96H} found a formula for the Hausdorff dimension of $W(\phi)$. Later, Rynne \cite{Rynne98T} observed that the dimension formula obtained in \cite{HS96H} can be simplified as follows.

\begin{thm}\cite{Rynne98T}\label{thm2KM}
Let $\phi:\N\to \R_{\geq0}$ be a nonnegative function.
Then 
$$\dim_{\rm H}W(\phi)=\min\{s_*,1\},$$ where 
\begin{equation*}\label{eqlAonly}
s_*=\inf\left\{s\geq 0: \sum_{q=1}^{\infty}q\left(\frac{\phi(q)}{q}\right)^s<\infty\right\}.
\end{equation*}
\end{thm}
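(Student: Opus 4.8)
The plan is to establish the two inequalities $\dim_{\rm H}W(\phi)\le \min\{s_*,1\}$ and $\dim_{\rm H}W(\phi)\ge \min\{s_*,1\}$ separately. Write $W(\phi)=\limsup_{q\to\infty}A_q$, where $A_q=\{x\in[0,1]:\|qx\|<\phi(q)\}$. After discarding the $q$ with $\phi(q)\ge 1/2$ (these do not affect the $\limsup$, and if there were infinitely many of them then $W(\phi)$ would have full Lebesgue measure and $s_*\ge 1$, giving $\dim_{\rm H}W(\phi)=1=\min\{s_*,1\}$ at once), each $A_q$ is a union of at most $q+1$ intervals centred at the points $k/q$ of radius $\phi(q)/q$. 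Note that the summand $q(\phi(q)/q)^s$ appearing in the definition of $s_*$ is, up to the constant $2^s$, exactly the $s$-sum of the intervals making up $A_q$.

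For the upper bound I would use this natural cover directly. For any $s>s_*$ the series $\sum_q q(\phi(q)/q)^s$ converges, so for every $Q$ the family $\{B(k/q,\phi(q)/q):q\ge Q,\,0\le k\le q\}$ covers $W(\phi)$ by intervals of diameter at most $1/Q$ whose $s$-sum is at most $2^{1+s}\sum_{q\ge Q}q(\phi(q)/q)^s\to 0$ as $Q\to\infty$. Hence $\mathcal{H}^s(W(\phi))=0$ and $\dim_{\rm H}W(\phi)\le s$; letting $s\downarrow s_*$ and recalling the trivial bound $\dim_{\rm H}W(\phi)\le 1$ yields $\dim_{\rm H}W(\phi)\le\min\{s_*,1\}$.

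The substance of the proof is the lower bound. Fix $s<\min\{s_*,1\}$, so that $\sum_q q(\phi(q)/q)^s=\infty$ and in particular $s<1$. The strategy is to build a Cantor-type subset $\mathcal{K}\subseteq W(\phi)$ carrying a mass distribution $\mu$ with a Frostman bound $\mu(B(x,r))\ll r^s$, whence $\dim_{\rm H}W(\phi)\ge\dim_{\rm H}\mathcal{K}\ge s$ by the mass distribution principle; letting $s\uparrow\min\{s_*,1\}$ then finishes the proof. The divergence of the series guarantees that the intervals of $\bigcup_{q\ge Q}A_q$ carry unbounded $s$-mass for every $Q$, which is the quantitative input needed to keep the construction alive at arbitrarily fine scales. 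The equidistribution of the centres $\{k/q:0\le k\le q\}$, made quantitative by the Erd\H{o}s--Tur\'an inequality (Lemma \ref{eqETinq}) already used in Section \ref{Sec2}, then supplies the local ubiquity statement required: inside any fixed interval a definite proportion of the centres $k/q$ fall well inside it, so a positive fraction of the new intervals $B(k/q,\phi(q)/q)$ can be retained as pairwise separated children, and mass can be distributed evenly among them.

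The main obstacle is precisely the absence of monotonicity: consecutive scales may produce intervals of wildly different sizes, so one cannot naively nest the level sets $A_q$. I would circumvent this by selecting a rapidly increasing subsequence of scales $q_1<q_2<\cdots$ — grouping the denominators by the dyadic size of $\phi(q)$ to locate, at each stage, a scale contributing a substantial portion of the divergent $s$-mass, and choosing $q_{i+1}$ so large that $\phi(q_{i+1})/q_{i+1}$ is negligible against the length of the intervals surviving at stage $i$. This forces genuine nesting and separation of the retained intervals, after which the even mass distribution yields the Frostman exponent $s$. Controlling the interplay between the (non-monotone) radii $\phi(q)/q$ and the spacing $1/q$ of the centres, so that the ubiquity proportion and the Frostman constant stay uniform across stages, is the delicate point of the argument.
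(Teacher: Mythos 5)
The paper never proves this statement: Theorem \ref{thm2KM} is imported wholesale from Rynne \cite{Rynne98T} (simplifying Hinokuma--Shiga \cite{HS96H}), so there is no internal proof to compare against and your attempt has to stand on its own. Your upper bound does: it is the standard covering argument, the reduction to $\phi(q)<1/2$ is handled correctly, and the computation $\sum_{q\ge Q}(q+1)(2\phi(q)/q)^s\ll\sum_{q\ge Q}q(\phi(q)/q)^s\to 0$ is complete.

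The lower bound, which you rightly identify as the substance, has a genuine gap at the stage-selection step. If each stage of the Cantor construction uses a \emph{single} denominator $q_{i+1}$, then a parent interval $I$ of length $\ell$ carrying mass $\approx\ell^s$ is split into $\approx q_{i+1}\ell$ children of length $\approx\phi(q_{i+1})/q_{i+1}$, and the Frostman bound at the children's scale forces $q_{i+1}\left(\phi(q_{i+1})/q_{i+1}\right)^s\gg\ell^{s-1}$. Since $s<1$, the right-hand side blows up as $\ell\to 0$, so you need individual terms of the series to be arbitrarily large --- and divergence does not supply that. Concretely, take $\phi(q)=q^{1-2/s_0}$ with $0<s_0<1$ (the classical Jarn\'{\i}k--Besicovitch case, where $s_*=s_0$): every term equals $q^{1-2s/s_0}\le 1$ once $s\ge s_0/2$, so a one-denominator-per-stage construction cannot certify any exponent beyond $s_0/2$, half the truth. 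The standard repair is to use at each stage a whole block of denominators whose terms jointly carry mass $\gg\ell^{s-1}$; but then the intervals $B(k/q,\phi(q)/q)$ for different $q$ in the block overlap heavily (non-reduced fractions coincide), and extracting enough pairwise separated children requires counting reduced fractions (lower bounds for $\sum\varphi(q)$ over the block) or the full local-ubiquity machinery. That counting problem --- not the equidistribution of $\{k/q\}_{0\le k\le q}$ for fixed $q$, which is exactly uniform so the Erd\H{o}s--Tur\'{a}n inequality buys you nothing --- is the real content of the Hinokuma--Shiga/Rynne argument, and your sketch names the difficulty without resolving it.
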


Since Theorem \ref{thm2KM} holds without assuming $\phi$ is monotonic, it can applied to deduce the following result on approximation for subsequences of integers, which will be used in our proof of Theorem \ref{thm1Haus}.

\begin{lem}\label{lemHanM}
 Let $\mathcal{A}=\{a_n\}_{n\in\N}$ be an infinite subset of positive integers and $\psi: \N\to (0,1)$ be a positive function. Set 
\[W_{\mathcal{A}}(\psi)=\{x\in [0,1]: \|a_nx\|<\psi(n) \text{ for i.m. } n\in\N\}.\]
Then  we have $$\dim_{\rm H}W_{\mathcal{A}}(\psi)=\min\{s_*,1\},$$ where 
\begin{equation*}\label{eqlAonly}
s_*=\inf\left\{s\geq 0: \sum_{n=1}^{\infty}a_n\left(\frac{\psi(n)}{a_n}\right)^s<\infty\right\}.
\end{equation*}
\end{lem}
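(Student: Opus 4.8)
The plan is to deduce Lemma \ref{lemHanM} from Theorem \ref{thm2KM} by constructing an auxiliary function $\phi:\N\to\R_{\geq 0}$ supported exactly on the set $\mathcal{A}=\{a_n\}_{n\in\N}$, so that the set $W_{\mathcal{A}}(\psi)$ coincides with the classical set $W(\phi)$. The key point is that $\mathcal{A}$ consists of distinct positive integers, so every element of $\mathcal{A}$ is hit by a unique index $n$, and we may define $\phi$ on those integers via the corresponding $\psi$-values and set $\phi$ to vanish elsewhere.

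First I would handle a bookkeeping subtlety: different indices $n$ give distinct values $a_n$ (since $\mathcal{A}$ is an infinite \emph{subset} of $\N$, its elements are distinct), so for each $q\in\mathcal{A}$ there is a well-defined $n=n(q)$ with $a_{n(q)}=q$. I would then define
\begin{equation*}
\phi(q)=\begin{cases}\psi(n(q)), & q\in\mathcal{A},\\ 0, & q\notin\mathcal{A}.\end{cases}
\end{equation*}
With this choice, for $q\notin\mathcal{A}$ the condition $\|qx\|<\phi(q)=0$ is never satisfied, so only the denominators $q\in\mathcal{A}$ contribute to $W(\phi)$; for such $q$ the condition $\|qx\|<\phi(q)$ reads $\|a_{n(q)}x\|<\psi(n(q))$. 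Since $q\mapsto n(q)$ is a bijection between $\mathcal{A}$ and $\N$, the statement ``$\|qx\|<\phi(q)$ for infinitely many $q\in\N$'' is equivalent to ``$\|a_nx\|<\psi(n)$ for infinitely many $n\in\N$''. Hence $W(\phi)=W_{\mathcal{A}}(\psi)$ as sets.

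Next I would check that the two critical exponents agree. Applying Theorem \ref{thm2KM} to $\phi$ gives $\dim_{\rm H}W(\phi)=\min\{s_*,1\}$ with $s_*=\inf\{s\geq 0:\sum_{q=1}^{\infty}q(\phi(q)/q)^s<\infty\}$. Since $\phi(q)=0$ off $\mathcal{A}$, the terms with $q\notin\mathcal{A}$ contribute nothing to the series (here one uses that $\phi(q)^s=0$ for those $q$, valid for $s>0$; the case $s=0$ is irrelevant as the series trivially diverges there unless $\mathcal{A}$ is finite, which it is not). Re-indexing the surviving terms by $n$ via $q=a_n$ turns $\sum_{q\in\mathcal{A}}q(\phi(q)/q)^s$ into $\sum_{n=1}^{\infty}a_n(\psi(n)/a_n)^s$, so the infimum defining $s_*$ is exactly the infimum in the statement of the lemma. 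Combining the set equality $W(\phi)=W_{\mathcal{A}}(\psi)$ with the matching of exponents yields the claimed formula.

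The main obstacle, though a minor one, is the convergence/divergence bookkeeping at the boundary value $s=0$ and the subtle handling of $q$-values not lying in $\mathcal{A}$: one must confirm that discarding those terms does not alter the infimum, which rests on $\phi(q)^s=0$ for $q\notin\mathcal{A}$ and $s>0$, and on the fact that $\mathcal{A}$ being infinite keeps the series nontrivial. No deep analysis is required; the entire argument is a translation of $W_{\mathcal{A}}(\psi)$ into the language of $W(\phi)$ so that Rynne's theorem applies verbatim.
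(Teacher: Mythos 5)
Your proposal is correct and follows essentially the same route as the paper: define $\phi$ supported on $\mathcal{A}$ via $\phi(a_n)=\psi(n)$ and $\phi(q)=0$ otherwise (well-defined because the elements of $\mathcal{A}$ are distinct), observe that $W_{\mathcal{A}}(\psi)=W(\phi)$, and apply Theorem \ref{thm2KM}. Your extra bookkeeping about re-indexing the series and the boundary case $s=0$ is fine but not needed beyond what the paper records.
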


To see Lemma \ref{lemHanM}, define a nonnegative function $\phi: \N\to \R_{\geq0}$ by 
\[\phi(q):=\begin{cases}\psi(n), & \text{ if }q=a_n\in\mathcal{A},\\
0, & \text{ if }q\not\in \mathcal{A}.
\end{cases}\]
Notice that $\phi$ is well-defined since the elements in the sequence $\mathcal{A}$ are distinct. 
Clearly,  $W_{\mathcal{A}}(\psi)=W(\phi)=\{x\in [0,1]: \|qx\|<\phi(q) \text{ for i.m. }q\in \N\}$. Then Lemma \ref{lemHanM} readily follows from Theorem \ref{thm2KM}.

\begin{proof}[Proof of Theorem \ref{thm1Haus}]
\textbf{Hausdorff dimension of $W_{\mathcal{A},\mathcal{B}}^{\times}(\psi)$.} Recall that 
\begin{equation*}\label{dimlambda}
\lambda=\inf\left\{s\geq 0: \sum_{n=1}^{\infty}\left[a_n\left(\frac{\psi(n)}{a_n}\right)^s+b_n\left(\frac{\psi(n)}{b_n}\right)^s\right]<\infty\right\}.
\end{equation*}
We aim to show that 
\begin{equation}\label{eqDimForWAB}
W_{\mathcal{A},\mathcal{B}}^{\times}(\psi)=\min\{1+\lambda,2\}.	
\end{equation}
 To see the ``$\leq$'' part, we may assume that $\lambda<1$ since otherwise there is nothing to prove. Then for any $s\in (\lambda,1)$ we have 
\[\sum_{n=1}^{\infty}\left[a_n\left(\frac{\psi(n)}{a_n}\right)^s+b_n\left(\frac{\psi(n)}{b_n}\right)^s\right]<\infty.\]
It then follows from Theorem \ref{thm1HMeas} that $\mathcal{H}^{1+s}(W_{\mathcal{A},\mathcal{B}}^{\times}(\psi))=0$. Hence $\dim_{\rm H}W_{\mathcal{A},\mathcal{B}}^{\times}(\psi)\leq 1+\lambda$.
To prove the ``$\geq$'' part, 
notice that 
\[W_{\mathcal{A},\mathcal{B}}^{\times}(\psi)\supseteq W_{\mathcal{A}}(\psi)\times[0,1] \quad \text{ and }\quad W_{\mathcal{A},\mathcal{B}}^{\times}(\psi)\supseteq [0,1] \times W_{\mathcal{B}}(\psi),\]
where 
\[W_{\mathcal{A}}(\psi)=\{x\in [0,1]: \|a_nx\|<\psi(n)\text{ for i.m. }n\in\N\},\]
\[W_{\mathcal{B}}(\psi)=\{y\in [0,1]: \|b_ny\|<\psi(n)\text{ for i.m. }n\in\N\}.\]
Hence we have 
\begin{equation}\label{eqDimW01P}
\dim_{\rm H}W_{\mathcal{A},\mathcal{B}}^{\times}(\psi)\geq 1+\max\{\dim_{\rm H}W_{\mathcal{A}}(\psi), \dim_{\rm H}W_{\mathcal{B}}(\psi)\}.
\end{equation}
Here we have used a well-known inequality for the Hausdorff dimension of Cartesian products of sets; see e.g. \cite[Corollary 5.10]{Falconer86T}.
We may assume that $\lambda>0$, since otherwise the ``$\geq$'' part of \eqref{eqDimForWAB} holds trivially. Then for any $s\in (0,\min\{\lambda,1\})$ we have 
\[\sum_{n=1}^{\infty}\left[a_n\left(\frac{\psi(n)}{a_n}\right)^s+b_n\left(\frac{\psi(n)}{b_n}\right)^s\right]=\infty,\]
which implies that either $\sum_{n=1}^{\infty}a_n\left(\frac{\psi(n)}{a_n}\right)^s$ or 
$\sum_{n=1}^{\infty}b_n\left(\frac{\psi(n)}{b_n}\right)^s$ diverges. Hence by Lemma \ref{lemHanM}, either $W_{\mathcal{A}}(\psi)$ or $W_{\mathcal{B}}(\psi)$ has Hausdorff dimension at least $\min\{1,\lambda\}$. This combining with \eqref{eqDimW01P} yields the ``$\geq$'' part of \eqref{eqDimForWAB}.

\textbf{Hausdorff dimension of $W_{\mathcal{A},\mathcal{B}}(\psi)$.} To prove that 
\[\dim_{\rm H}W_{\mathcal{A},\mathcal{B}}(\psi)=\min\{\gamma,1\},\]
where 
\begin{equation}\label{dimgamma}
\gamma=\inf\left\{s\geq 0: \sum_{n=1}^{\infty}\left[a_n\left(\frac{\psi(n)}{a_n}\right)^s+{\rm gcd}(a_n,b_n)\left(\frac{\psi(n)}{a_nb_n}\right)^{\frac{s}{2}}+b_n\left(\frac{\psi(n)}{b_n}\right)^s\right]<\infty\right\},
\end{equation}
we first notice that  a similar argument as above easily yields that $\dim_{\rm H}W_{\mathcal{A},\mathcal{B}}(\psi)\leq\min\{\gamma,1\}$. Below we prove the ``$\geq$'' part.

Again we may assume that $\gamma>0$. Then for any $s\in (0,\min\{\gamma,1\})$, the series in \eqref{dimgamma} diverges and so at least one of the following three series diverges: 
\begin{equation}\label{threeseries}
\sum_{n=1}^{\infty}a_n\left(\frac{\psi(n)}{a_n}\right)^s, \quad \sum_{n=1}^{\infty}g_n\left(\frac{\psi(n)}{a_nb_n}\right)^{\frac{s}{2}}, \quad \sum_{n=1}^{\infty}b_n\left(\frac{\psi(n)}{b_n}\right)^s,	
\end{equation}
where $g_n={\rm gcd}(a_n,b_n)$.

It is clear that 
\[W_{\mathcal{A},\mathcal{B}}(\psi)\supseteq \left\{x\in [0,1]: \|a_nx\|<\psi(n)\text{ for i.m. }n\in\N\right\}=W_{\mathcal{A}}(\psi),\] 
\[W_{\mathcal{A},\mathcal{B}}(\psi)\supseteq \left\{x\in [0,1]: \|b_nx\|<\psi(n)\text{ for i.m. }n\in\N\right\}=W_{\mathcal{B}}(\psi).\]
Furthermore, by Lemma \ref{LemSubset}, we have 
\begin{equation}\label{eqWG}
W_{\mathcal{A},\mathcal{B}}(\psi)\supseteq \left\{x\in [0,1]: \|g_nx\|<g_n\left(\frac{\psi(n)}{a_nb_n}\right)^{\frac{1}{2}} \text{ for i.m. }n\in\N\right\}.	
\end{equation}
By \eqref{threeseries} and  Lemma \ref{lemHanM}, one of $W_{\mathcal{A}}(\psi), W_{\mathcal{B}}(\psi)$, and  the right-hand side of \eqref{eqWG} has Hausdorff dimension at least $\min\{\gamma,1\}$. Hence $\dim_{\rm H}W_{\mathcal{A},\mathcal{B}}(\psi)\geq \min\{\gamma,1\}$. This completes the proof of Theorem \ref{thm1Haus}.
\end{proof}

\section{Proof of Theorem \ref{ThmSabPsi}}\label{Sec4}

In  this section, we give the proof of Theorem \ref{ThmSabPsi}. 

Given two sequences of positive integers $\mathcal{A}=\{a_n\}_{n\in\N}$ and $\mathcal{B}=\{b_n\}_{n\in\N}$ and two positive functions $\psi, \phi:\N\to (0,1)$, 
let $$S_{\mathcal{A},\mathcal{B}}(\psi,\phi)=\{x\in[0,1]: \|a_nx\|<\psi(n)\text{ and }\|b_nx\|<\phi(n)\text{ for i.m. }n\in\N\}.$$
Let $s\in (0,1]$.
Then from Lemma \ref{corcovering} we see that if 
\begin{equation}\label{eqSimuCon}
\sum_{n=1}^{\infty}\left[g_n+a_nb_n\max\left\{\frac{\psi(n)}{a_n}, \frac{\phi(n)}{b_n}\right\}\right]\min\left\{\frac{\psi(n)}{a_n}, \frac{\phi(n)}{b_n}\right\}^s<\infty,	
\end{equation}
then $\mathcal{H}^s(S_{\mathcal{A},\mathcal{B}}(\psi,\phi))=0$, where $g_n={\rm gcd}(a_n,b_n)$.
In the special case when $\psi=\phi$, set 
$$S_{\mathcal{A},\mathcal{B}}(\psi)=\{x\in[0,1]: \max\{\|a_nx\|, \|b_nx\|\}<\psi(n)\text{ for i.m. }n\in\N\}.$$
If in addition that  $a_n\leq b_n$ for all $n$, then the above convergence result is simplified to  the following:
\begin{equation}\label{eqanlesbnspsi}
\sum_{n=1}^{\infty}\left(g_n+b_n\psi(n)\right)\left(\frac{\psi(n)}{b_n}\right)^s<\infty \Longrightarrow \mathcal{H}^s(S_{\mathcal{A},\mathcal{B}}(\psi))=0.
\end{equation} 
Consequently, we have 
\begin{equation}\label{eqKappa}
\dim_{\rm H}S_{\mathcal{A}, \mathcal{B}}(\psi)\leq \kappa,
\end{equation}
where
\begin{equation}\label{eqDefKappa}
\kappa=\inf\left\{s>0: \sum_{n=1}^{\infty}(g_n+b_n\psi(n))\left(\frac{\psi(n)}{b_n}\right)^s<\infty\right\}.	
\end{equation}

Now we present the proof of Theorem \ref{ThmSabPsi}.

\begin{proof}[Proof of Theorem \ref{ThmSabPsi}]
Let $g={\rm gcd}(a,b)$.
Notice that since $\tau>1$,  for each $s\in (0,1)$ the series
\begin{equation}\label{eqSumSeries}
\sum_{n=1}^{\infty}(g^n+b^n\psi(n))\left(\frac{\psi(n)}{b^{n}}\right)^s=\sum_{n=1}^{\infty}g^n\left(\frac{\psi(n)}{b^n}\right)^s+\sum_{n=1}^{\infty}b^{n(1-s)+(1+s)\log_b\psi(n)}
\end{equation}
 has the same convergence and divergence property with $\sum_{n=1}^{\infty}g^n\left(\frac{\psi(n)}{b^n}\right)^s$, since the second series in the right-hand side of \eqref{eqSumSeries} converges. Hence the series in \eqref{eqSumSeries} converges  for any $s>\frac{\log_b{\rm gcd}(a,b)}{(1+\tau)}$, and so by \eqref{eqKappa}-\eqref{eqDefKappa}, we have 
\begin{equation}\label{eqDimSabLeq}
\dim_{\rm H}S_{a,b}(\tau)\leq \frac{\log_b{\rm gcd}(a,b)}{(1+\tau)}.
\end{equation}
On the other hand,  Lemma \ref{LemSubset} implies that $S_{a,b}(\tau)$ contains the set 
\[\left\{x\in [0,1]: \|g^nx\|<g^n\left(\frac{\psi(n)}{b^n}\right)\text{ for i.m. }n\in\N\right\},\]
which is known to have  Hausdorff dimension $\frac{\log_b{\rm gcd}(a,b)}{(1+\tau)}$ (cf. \cite{HilVelani95T}; see also \cite{SLMWBW13S}). Hence the reverse inequality in \eqref{eqDimSabLeq} holds. This completes the proof the theorem.	
\end{proof}

\section{Final remarks}\label{SecRemark}

In this section, we give some remarks concerning the sharpness and extensions of our results.

\subsection{Divergence results for Hausdorff measures} In Theorem \ref{thm1HMeas}, for each of  the sets $W^{\times}_{\mathcal{A},\mathcal{B}}(\psi)$ and $W_{\mathcal{A},\mathcal{B}}(\psi)$, we give a condition in terms of convergence of a certain series  so that the Hausdorff measure equals zero. One may wonder if the condition actually provides a dichotomy for the Hausdorff measure to be zero or infinity; i.e., whether the set has infinite Hausdorff measure if the series diverges. We are unable to prove this in  the full generality of Theorem \ref{thm1HMeas}. However, we point out that for some classes of $\mathcal{A}$ and $\mathcal{B}$ the answer is affirmative.

An infinite subset  of positive integers $\mathcal{A}=\{a_n\}_{n\in\N}$ is said to be {\em lacunary} if there there exists a constant $K>1$  such that for all $n\geq 1$,
\[\frac{a_{n+1}}{a_n}\geq K.\]
Given such an $\mathcal{A}$ and a nonnegative function $\psi:\N\to\R_{\geq0}$ which is not necessarily monotonic, it is known that (cf. \cite[Theorem 7.3]{Harman98}) the Lebesgue measure of the set 
\[W_{\mathcal{A}}(\psi)=\{x\in [0,1]: \|a_nx\|<\psi(n)\text{ for i.m. }n\in\N\}\]
satisfies the following zero-one dichotomy:
\begin{equation}\label{eq01Leb}
\mathcal{L}(W_{\mathcal{A}}(\psi))=\begin{cases}0, & \text{ if }\sum_{n=1}^{\infty}\psi(n)<\infty,\\
1, & \text{ if }\sum_{n=1}^{\infty}\psi(n)=\infty.
\end{cases}
\end{equation}
Far-reaching generalizations of this result were recently obtained in \cite{PVZZ22I}. By \eqref{eq01Leb} and a standard application of the mass transference principle established in \cite{VBSV06}, we see that for $s\in (0,1)$, the $s$-dimensional Hausdorff measure of $W_{\mathcal{A}}(\psi)$ satisfies a zero-infinity dichotomy as follows:
\begin{equation}\label{eq0infHaus}
\mathcal{H}^s(W_{\mathcal{A}}(\psi))=\begin{cases}0, & \text{ if }\sum_{n=1}^{\infty}a_n\left(\frac{\psi(n)}{a_n}\right)^s<\infty,\\
\infty, & \text{ if }\sum_{n=1}^{\infty}a_n\left(\frac{\psi(n)}{a_n}\right)^s=\infty.
\end{cases}
\end{equation}
Based on this fact and our result Theorem \ref{thm1HMeas}, we have the following.

\begin{thm}\label{ThmLacunary}
Let $W^{\times}_{\mathcal{A},\mathcal{B}}(\psi)$ and $W_{\mathcal{A},\mathcal{B}}(\psi)$ be as in Theorem \ref{thm1HMeas}. Let $s\in (0,1)$. If $\mathcal{A}$ and $\mathcal{B}$ are both lacunary, then we have 
\begin{equation}\label{eqMtimesHaus}
\mathcal{H}^{1+s}(W^{\times}_{\mathcal{A},\mathcal{B}}(\psi))=\begin{cases}0, & \text{ if }\sum_{n=1}^{\infty}\left[a_n\left(\frac{\psi(n)}{a_n}\right)^s+b_n\left(\frac{\psi(n)}{b_n}\right)^s\right]<\infty,\\
\infty, & \text{ if }\sum_{n=1}^{\infty}\left[a_n\left(\frac{\psi(n)}{a_n}\right)^s+b_n\left(\frac{\psi(n)}{b_n}\right)^s\right]=\infty.
\end{cases}
\end{equation}
If in addition that $\mathcal{G}=\{g_n\}_{n\in\N}$ is also lacunary, where $g_n={\rm gcd}(a_n,b_n)$, then
\begin{equation}\label{eqdivWAB}
\mathcal{H}^s(W_{\mathcal{A},\mathcal{B}}(\psi))=\begin{cases}0, & \text{ if }\sum_{n=1}^{\infty}\left[a_n\left(\frac{\psi(n)}{a_n}\right)^s+g_n\left(\frac{\psi(n)}{a_nb_n}\right)^{\frac{s}{2}}+b_n\left(\frac{\psi(n)}{b_n}\right)^s\right]<\infty,	\\
\infty, & \text{ if }\sum_{n=1}^{\infty}\left[a_n\left(\frac{\psi(n)}{a_n}\right)^s+g_n\left(\frac{\psi(n)}{a_nb_n}\right)^{\frac{s}{2}}+b_n\left(\frac{\psi(n)}{b_n}\right)^s\right]=\infty.
\end{cases}
\end{equation} 
\end{thm}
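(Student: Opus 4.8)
The plan is to observe first that the convergence (``$\mathcal{H}=0$'') halves of both \eqref{eqMtimesHaus} and \eqref{eqdivWAB} are nothing but the conclusions of Theorem \ref{thm1HMeas}: the two convergence series there coincide term by term with the ones here (with $g_n={\rm gcd}(a_n,b_n)$), so no new argument is needed for those. All the work lies in the divergence halves, where ``the series diverges'' must be upgraded to ``the Hausdorff measure is infinite.'' The unifying idea is to discard the multiplicative coupling of the coordinates and bound $W^{\times}_{\mathcal{A},\mathcal{B}}(\psi)$ and $W_{\mathcal{A},\mathcal{B}}(\psi)$ from below by genuinely one-dimensional approximation sets, to which the lacunary zero-infinity dichotomy \eqref{eq0infHaus} applies. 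These are exactly the containments already used to prove the lower bounds in Theorem \ref{thm1Haus}.

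For \eqref{eqMtimesHaus}, suppose $\sum_n\big[a_n(\psi(n)/a_n)^s+b_n(\psi(n)/b_n)^s\big]=\infty$. Then at least one summand series diverges; by the symmetric roles of $\mathcal{A}$ and $\mathcal{B}$ I would assume $\sum_n a_n(\psi(n)/a_n)^s=\infty$. Since $\mathcal{A}$ is lacunary, \eqref{eq0infHaus} gives $\mathcal{H}^s(W_{\mathcal{A}}(\psi))=\infty$. I then use the containment $W^{\times}_{\mathcal{A},\mathcal{B}}(\psi)\supseteq W_{\mathcal{A}}(\psi)\times[0,1]$ together with a measure-level product inequality for Hausdorff measures: there is a constant $c>0$ with $\mathcal{H}^{s+1}(E\times[0,1])\geq c\,\mathcal{H}^{s}(E)$ for every Borel $E\subseteq[0,1]$ (equivalently, the slicing inequality $\mathcal{H}^{s}(E)\leq C\,\mathcal{H}^{s+1}(E\times[0,1])$; see e.g. \cite{Falconer86T}). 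Taking $E=W_{\mathcal{A}}(\psi)$ yields $\mathcal{H}^{1+s}(W^{\times}_{\mathcal{A},\mathcal{B}}(\psi))=\infty$, as required.

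For \eqref{eqdivWAB}, suppose the three-term series diverges, so at least one of the three series in \eqref{threeseries} diverges. If $\sum_n a_n(\psi(n)/a_n)^s=\infty$ (respectively the $b_n$ series), then since $W_{\mathcal{A},\mathcal{B}}(\psi)\supseteq W_{\mathcal{A}}(\psi)$ (respectively $\supseteq W_{\mathcal{B}}(\psi)$) and $\mathcal{A}$ (respectively $\mathcal{B}$) is lacunary, \eqref{eq0infHaus} immediately gives infinite $\mathcal{H}^{s}$-measure. If instead $\sum_n g_n(\psi(n)/(a_nb_n))^{s/2}=\infty$, I would invoke the containment \eqref{eqWG}, namely $W_{\mathcal{A},\mathcal{B}}(\psi)\supseteq W_{\mathcal{G}}(\phi)$ with $\mathcal{G}=\{g_n\}_{n\in\N}$ and $\phi(n)=g_n(\psi(n)/(a_nb_n))^{1/2}$. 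Since $g_n(\phi(n)/g_n)^{s}=g_n(\psi(n)/(a_nb_n))^{s/2}$, the series governing $W_{\mathcal{G}}(\phi)$ via \eqref{eq0infHaus} is exactly the divergent one; as $\mathcal{G}$ is assumed lacunary, \eqref{eq0infHaus} applied to $\mathcal{G}$ gives $\mathcal{H}^{s}(W_{\mathcal{G}}(\phi))=\infty$, hence $\mathcal{H}^{s}(W_{\mathcal{A},\mathcal{B}}(\psi))=\infty$. Note that this third case needs no product inequality, staying entirely in dimension one, which is precisely why the extra hypothesis that $\mathcal{G}$ be lacunary is imposed.

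I expect the main (though modest) obstacle to be the measure-level transfer in the $\times$-case: a dimension inequality such as $\dim_{\rm H}(E\times F)\geq\dim_{\rm H}E+\dim_{\rm H}F$ (already used for the lower bounds in Theorem \ref{thm1Haus}) does not suffice to force an \emph{infinite} measure, so the argument must use the stronger product/slicing inequality, and, should one prefer the product form with its finiteness caveat, combine it with inner regularity by passing to compact subsets of $W_{\mathcal{A}}(\psi)$ of arbitrarily large finite $\mathcal{H}^{s}$-measure. Two minor checks remain: that \eqref{eq0infHaus} legitimately applies to $\mathcal{G}$ with approximating function $\phi$, which is valid because lacunarity forces $\{g_n\}$ to be strictly increasing, hence distinct, so the induced function on the integers is well defined exactly as in the reduction proving Lemma \ref{lemHanM}; and that $\phi(n)\in(0,1)$, which holds since $a_nb_n\geq g_n^2$ and $\psi(n)<1$ give $\phi(n)\leq\psi(n)^{1/2}<1$.
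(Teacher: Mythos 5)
Your proposal is correct and follows essentially the same route as the paper: convergence halves from Theorem \ref{thm1HMeas}, divergence halves via the containments $W^{\times}_{\mathcal{A},\mathcal{B}}(\psi)\supseteq W_{\mathcal{A}}(\psi)\times[0,1]$, $W_{\mathcal{A},\mathcal{B}}(\psi)\supseteq W_{\mathcal{A}}(\psi),\,W_{\mathcal{B}}(\psi),\,W_{\mathcal{G}}(\phi)$ and the lacunary zero-infinity dichotomy \eqref{eq0infHaus}. The product-measure inequality you invoke for the $\times$-case is exactly the paper's citation of \cite[Theorem 5.8]{Falconer86T}, and your extra checks (well-definedness for $\mathcal{G}$, $\phi(n)<1$) are sound though left implicit in the paper.
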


\begin{proof}
The convergence parts of \eqref{eqMtimesHaus} and \eqref{eqdivWAB} follow from Theorem \ref{thm1HMeas}.  If  the series in \eqref{eqMtimesHaus} diverges, then either $\sum_{n=1}^{\infty}a_n\left(\frac{\psi(n)}{a_n}\right)^s=\infty$ or $\sum_{n=1}^{\infty}b_n\left(\frac{\psi(n)}{b_n}\right)^s=\infty$. Thus by \eqref{eq0infHaus}, we have either $\mathcal{H}^s(W_{\mathcal{A}}(\psi))=\infty$ or $\mathcal{H}^s(W_{\mathcal{B}}(\psi))=\infty$. Notice that 
\[W_{\mathcal{A},\mathcal{B}}^{\times}(\psi)\supseteq W_{\mathcal{A}}(\psi)\times [0,1] \quad \text{ and } \quad W_{\mathcal{A},\mathcal{B}}^{\times}(\psi)\supseteq  [0,1]\times  W_{\mathcal{B}}(\psi).\]
It then follows from \cite[Theorem 5.8]{Falconer86T} that $\mathcal{H}^{1+s}(W_{\mathcal{A},\mathcal{B}}^{\times}(\psi))=\infty$.

Next, suppose  the series in \eqref{eqdivWAB} diverges. Then at least one of the following three series diverges: 
\begin{equation*}\label{eq3series}
\sum_{n=1}^{\infty}a_n\left(\frac{\psi(n)}{a_n}\right)^s, \quad \sum_{n=1}^{\infty}g_n\left(\frac{\psi(n)}{a_nb_n}\right)^{\frac{s}{2}}, \quad \sum_{n=1}^{\infty}b_n\left(\frac{\psi(n)}{b_n}\right)^s.
\end{equation*}
We have seen in the proof of Theorem \ref{thm1Haus} (cf. \eqref{eqWG}) that  
\[W_{\mathcal{A},\mathcal{B}}(\psi)\supseteq W_{\mathcal{A}}(\psi), \quad  W_{\mathcal{A},\mathcal{B}}(\psi)\supseteq   W_{\mathcal{B}}(\psi), \quad \text{and}\quad W_{\mathcal{A},\mathcal{B}}(\psi)\supseteq W_{\mathcal{G}}\left(g_n\left(\frac{\psi(n)}{a_nb_n}\right)^{\frac{1}{2}}\right).\]
Hence by \eqref{eq0infHaus} we have $\mathcal{H}^s(W_{\mathcal{A},\mathcal{B}}(\psi))=\infty$, completing the proof of the theorem.
\end{proof}

Theorem \ref{thm1Haus} can be also applied to give zero-infinity dichotomy for Hausdorff measures of $W_{\mathcal{A},\mathcal{B}}^{\times}(\psi)$ and  $W_{\mathcal{A},\mathcal{B}}(\psi)$ for some   $\mathcal{A}, \mathcal{B}$ which are not necessarily lacunary. To present such an example, we make use a recent result of \cite{PVZZ22I}. Given a set  $\mathcal{S}=\{p_1,\ldots, p_k\}$ of $k$ distinct prime numbers, let 
\begin{equation}\label{eqDefQS}
Q_{\mathcal{S}}=\left\{\prod_{i=1}^{k}p_i^{t_i}: t_1,\ldots, t_k\in \Z_{\geq0}\right\}
\end{equation}
be the set of positive integers with prime divisors restricted to $\mathcal{S}$. Let $\mathcal{A}=\{a_n\}_{n\in\N}\subseteq Q_{\mathcal{S}}$ be an increasing sequence of natural numbers. Then according to \cite[Corollary 2]{PVZZ22I},  the zero-one dichotomy \eqref{eq01Leb} holds, and again by the mass transference principle we have \eqref{eq0infHaus}. Based on this and an argument similar to the proof of Theorem \ref{ThmLacunary}, we have the following result.

\begin{thm}\label{ThmNoNLacunary}
Let $\mathcal{S}_1, \mathcal{S}_2$ be two finite sets of prime numbers, $\mathcal{A}=\{a_n\}_{n\in \N}\subseteq Q_{\mathcal{S}_1}, \mathcal{B}=\{b_n\}_{n\in\N}\subseteq Q_{\mathcal{S}_2}$ be two  increasing sequences of natural numbers. 
Let $W^{\times}_{\mathcal{A},\mathcal{B}}(\psi)$ and $W_{\mathcal{A},\mathcal{B}}(\psi)$ be as in Theorem \ref{thm1HMeas} and  $s\in (0,1)$. Then \eqref{eqMtimesHaus} holds.  Moreover, let $\mathcal{G}=\{g_n\}_{n\in \N}$ with $g_n={\rm gcd}(a_n,b_n)$. Then \eqref{eqdivWAB} holds in each of the following two cases: {\rm (i)} $\mathcal{G}$ is bounded; {\rm (ii)} $\mathcal{G}$ is an increasing sequence.
\end{thm}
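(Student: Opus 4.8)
The plan is to mirror the proof of Theorem \ref{ThmLacunary}, the only difference being the source of the zero-infinity dichotomy \eqref{eq0infHaus} for the one-dimensional sets $W_{\mathcal{A}}(\psi)$, $W_{\mathcal{B}}(\psi)$ and $W_{\mathcal{G}}(\cdot)$. First I would record the key input: since $\mathcal{A}\subseteq Q_{\mathcal{S}_1}$ and $\mathcal{B}\subseteq Q_{\mathcal{S}_2}$ are increasing sequences drawn from integers with restricted prime divisors, \cite[Corollary 2]{PVZZ22I} gives the Lebesgue zero-one law \eqref{eq01Leb} for both $W_{\mathcal{A}}(\psi)$ and $W_{\mathcal{B}}(\psi)$, and then the mass transference principle of \cite{VBSV06} upgrades this to the Hausdorff zero-infinity dichotomy \eqref{eq0infHaus} for each of these sets, for every $s\in(0,1)$.

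With \eqref{eq0infHaus} in hand for $W_{\mathcal{A}}(\psi)$ and $W_{\mathcal{B}}(\psi)$, the proof of \eqref{eqMtimesHaus} is immediate and identical to that in Theorem \ref{ThmLacunary}: the convergence direction is Theorem \ref{thm1HMeas}(i), and for divergence one observes that if the series in \eqref{eqMtimesHaus} diverges then at least one of $\sum_n a_n(\psi(n)/a_n)^s$ or $\sum_n b_n(\psi(n)/b_n)^s$ diverges, whence $\mathcal{H}^s(W_{\mathcal{A}}(\psi))=\infty$ or $\mathcal{H}^s(W_{\mathcal{B}}(\psi))=\infty$ by \eqref{eq0infHaus}; combined with the product containments $W_{\mathcal{A},\mathcal{B}}^{\times}(\psi)\supseteq W_{\mathcal{A}}(\psi)\times[0,1]$ and $W_{\mathcal{A},\mathcal{B}}^{\times}(\psi)\supseteq[0,1]\times W_{\mathcal{B}}(\psi)$ together with \cite[Theorem 5.8]{Falconer86T}, this forces $\mathcal{H}^{1+s}(W_{\mathcal{A},\mathcal{B}}^{\times}(\psi))=\infty$.

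For \eqref{eqdivWAB} the convergence direction is again Theorem \ref{thm1HMeas}(ii), and the divergence direction again reduces, via the three containments $W_{\mathcal{A},\mathcal{B}}(\psi)\supseteq W_{\mathcal{A}}(\psi)$, $W_{\mathcal{A},\mathcal{B}}(\psi)\supseteq W_{\mathcal{B}}(\psi)$, and $W_{\mathcal{A},\mathcal{B}}(\psi)\supseteq W_{\mathcal{G}}\big(g_n(\psi(n)/(a_nb_n))^{1/2}\big)$ from \eqref{eqWG}, to knowing that \eqref{eq0infHaus} holds for each of these three sets. The first two are handled exactly as above. The crux, and the only genuinely new point, is the third set $W_{\mathcal{G}}(\cdot)$: to apply \eqref{eq0infHaus} to it I need the sequence $\mathcal{G}=\{g_n\}$ to be one for which the zero-one law \eqref{eq01Leb}, and hence the mass transference upgrade, is available. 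This is precisely where the two cases enter. In case (ii), $\mathcal{G}$ is an increasing sequence; since each $g_n=\gcd(a_n,b_n)$ divides $a_n\in Q_{\mathcal{S}_1}$, every $g_n$ lies in $Q_{\mathcal{S}_1}$, so $\mathcal{G}$ is an increasing sequence inside a restricted-prime set and \cite[Corollary 2]{PVZZ22I} applies verbatim, giving \eqref{eq0infHaus} for $W_{\mathcal{G}}(\cdot)$.

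The main obstacle is case (i), where $\mathcal{G}$ is merely bounded and so need not be increasing (it may repeat values), which means \cite[Corollary 2]{PVZZ22I} does not directly apply; here I would argue more directly. If $\mathcal{G}$ is bounded, then along any subsequence on which $g_n$ is constant the relevant set $W_{\mathcal{G}}(\cdot)$ becomes a limsup of sets of the form $\{x:\|gx\|<\varphi_n\}$ for a single fixed integer $g$ and a tail function $\varphi_n=g(\psi(n)/(a_nb_n))^{1/2}$; such a set is, up to the fixed bi-Lipschitz scaling $x\mapsto gx$, a classical $W(\varphi)$ set, for which the Jarník-type dichotomy together with the mass transference principle yields \eqref{eq0infHaus}. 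Summing over the finitely many possible values of $g_n$, I expect that divergence of $\sum_n g_n(\psi(n)/(a_nb_n))^{s/2}$ forces divergence of the corresponding series along at least one constant-$g_n$ subsequence, whence $\mathcal{H}^s$ of the right-hand side of \eqref{eqWG} is infinite and the conclusion follows. The careful bookkeeping of decomposing into finitely many subsequences, and checking that the divergence survives on one of them, is the step most likely to require attention to detail, though it presents no real conceptual difficulty.
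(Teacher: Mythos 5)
Your treatment of \eqref{eqMtimesHaus} and of case (ii) of \eqref{eqdivWAB} is correct and coincides with the paper's argument: since each $g_n$ divides $a_n$ it lies in $Q_{\mathcal{S}_1}$, so when $\mathcal{G}$ is increasing \cite[Corollary 2]{PVZZ22I} together with the mass transference principle yields \eqref{eq0infHaus} for $W_{\mathcal{A}}(\psi)$, $W_{\mathcal{B}}(\psi)$ and $W_{\mathcal{G}}(\cdot)$, and the rest is verbatim the proof of Theorem \ref{ThmLacunary}.

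Case (i), however, contains a genuine gap, and the strategy you sketch there cannot be repaired. Along a subsequence on which $g_n\equiv g$ is constant, the set you want to subject to a Jarn\'ik-type dichotomy is $\limsup_{n}\{x\in[0,1]:\|gx\|<\varphi_n\}$ with a \emph{single fixed} denominator $g$ and radii $\varphi_n=g\left(\psi(n)/(a_nb_n)\right)^{1/2}$. Since $a_n,b_n\to\infty$ while $g$ is bounded, $\varphi_n\to 0$, and then this limsup set is exactly $\{x\in[0,1]:\|gx\|=0\}=\{0,1/g,\dots,1\}$: a finite set, of $\mathcal{H}^s$-measure zero for every $s>0$, regardless of whether $\sum_{n:\,g_n=g}g(\varphi_n/g)^s$ diverges. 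The dichotomy \eqref{eq0infHaus} is simply false for constant denominator sequences (it needs distinct, unbounded denominators), and the set is not ``up to bi-Lipschitz scaling a classical $W(\varphi)$ set'': after $x\mapsto gx$ all denominators become $1$. Consequently the right-hand side of \eqref{eqWG} is a finite set in case (i) and can never supply infinite Hausdorff measure, so your intended conclusion from the gcd term is unreachable. The paper's route is different and is the one that works: when $\mathcal{G}$ is bounded, Marstrand's estimate $a_n\gg e^{c_k\sqrt[k]{n}}$ for increasing sequences in $Q_{\mathcal{S}_1}$ (\cite[p.~545]{Marstrand70O}, see also \cite{FQQ24T}) shows that $\sum_{n}g_n\left(\psi(n)/(a_nb_n)\right)^{s/2}$ \emph{always converges}, so the gcd term can never be the source of divergence; divergence of the series in \eqref{eqdivWAB} then forces divergence of one of the other two series, and one concludes via $W_{\mathcal{A}}(\psi)$ or $W_{\mathcal{B}}(\psi)$ exactly as for \eqref{eqMtimesHaus}.
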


\begin{proof}
According to the paragraph preceding the theorem,  \eqref{eq0infHaus} holds for $\mathcal{A},\mathcal{B}$ and $\mathcal{G}$ when $\mathcal{G}$ is an increasing sequence. Then  by a similar argument as in the proof of Theorem \ref{ThmLacunary}, we see that \eqref{eqMtimesHaus} holds,  and \eqref{eqdivWAB} holds in the case (ii).  

To prove \eqref{eqdivWAB} in the case (i), assume $\mathcal{G}$ is bounded. By a result of Marstrand \cite[p.545]{Marstrand70O} (see also \cite{FQQ24T}) on the distribution of $Q_{\mathcal{S}_1}$, we have 
\[a_n\gg e^{c_k\sqrt[k]{n}},\]
where $k$ is the number of elements in $\mathcal{S}_1$, $c_k$ is a positive constant depending only on $k$, and the implicit constant in ``$\gg$'' is independent of $n$. It then follows that for every $s\in (0,1)$,  the series $\sum_{n=1}^{\infty}g_n\left(\frac{\psi(n)}{a_nb_n}\right)^{\frac{s}{2}}$ convergences. Hence the series in \eqref{eqdivWAB} has the same convergence/divergence property with the series in \eqref{eqMtimesHaus}. So a similar reasoning as above yields  \eqref{eqdivWAB}.
\end{proof}

\subsection{An example} Notice that in Theorem \ref{thm1HMeas} (and thus in Theorem \ref{thm1Haus}), the series involved in our results for  $W_{\mathcal{A},\mathcal{B}}^{\times}(\psi)$ and $W_{\mathcal{A},\mathcal{B}}(\psi)$ are of different form: there is an extra term
\begin{equation*}\label{eqgcdterm}
{\rm gcd}(a_n,b_n)\left(\frac{\psi(n)}{a_nb_n}\right)^{\frac{s}{2}}	
\end{equation*} 
in the series for $W_{\mathcal{A},\mathcal{B}}(\psi)$. This is not surprising, and one can easily construct examples to show that the term does play a role and hence cannot be omitted in general. For instance, let $s=\frac{1}{2}$, $a_n=4^n$, $b_n=8^n$ and $\psi(n)=\frac{1}{n^42^{3n}}$ for $n\in\N$. Then it is easily checked that  
\[\sum_{n=1}^{\infty}\left[a_n\left(\frac{\psi(n)}{a_n}\right)^s+b_n\left(\frac{\psi(n)}{b_n}\right)^s\right]<\infty, \quad \sum_{n=1}^{\infty}{\rm gcd}(a_n,b_n)\left(\frac{\psi(n)}{a_nb_n}\right)^{\frac{s}{2}}=\infty.\]
We thus have by Theorem \ref{ThmLacunary} that $\mathcal{H}^{s}(W_{\mathcal{A},\mathcal{B}}(\psi))=\infty$.

\subsection{Lebesgue measure}
In this paper, we mainly concern about the Hausdorff measures and dimensions of $W^{\times}_{\mathcal{A},\mathcal{B}}(\psi)$ and $W_{\mathcal{A},\mathcal{B}}(\psi)$. As for the Lebesgue measures of these sets, we only have some partial results. First notice that Lemma \ref{lemLebesgue} leads to the following convergence result of the Lebesgue measure of  $W_{\mathcal{A},\mathcal{B}}(\psi)$.

\begin{thm}\label{thmLebes}
Let $W_{\mathcal{A},\mathcal{B}}(\psi)$ be as in Theorem \ref{thm1HMeas}. 
If 
\begin{equation}\label{dimsstar58}
\sum_{n=1}^{\infty}\left[{\rm gcd}(a_n,b_n)\left(\frac{\psi(n)}{a_nb_n}\right)^{\frac{1}{2}}+\psi(n)\log\left(\frac{1}{\psi(n)}\right)\right]<\infty,
\end{equation}
then we have $\mathcal{L}(W_{\mathcal{A},\mathcal{B}}(\psi))=0$.
\end{thm}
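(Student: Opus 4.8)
The plan is to derive Theorem~\ref{thmLebes} as a direct consequence of Lemma~\ref{lemLebesgue} together with the convergence Borel--Cantelli lemma, exactly mirroring the structure already used in the proof of Theorem~\ref{thm1HMeas}. First I would recall the limsup representation
\[
W_{\mathcal{A},\mathcal{B}}(\psi)=\limsup_{n\to\infty}F_n, \qquad F_n=\{x\in[0,1]:\|a_nx\|\,\|b_nx\|<\psi(n)\}.
\]
Observing that $F_n=F_{a_n,b_n}(\psi(n)^{1/2})$ in the notation of Section~\ref{Sec2}, I would then apply Lemma~\ref{lemLebesgue} with $a=a_n$, $b=b_n$ and $\delta=\psi(n)^{1/2}$. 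This yields an absolute constant $C_3$ with
\[
\mathcal{L}(F_n)\le C_3\left[{\rm gcd}(a_n,b_n)\left(\frac{\psi(n)}{a_nb_n}\right)^{\frac{1}{2}}+\psi(n)\log\left(\frac{1}{\psi(n)}\right)\right],
\]
where I have used $\delta^2=\psi(n)$ and $\delta^2\log(1/\delta)=\tfrac12\,\psi(n)\log(1/\psi(n))$, so that the logarithmic term matches the summand in \eqref{dimsstar58} up to the absolute constant $C_3$.

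Next I would invoke the hypothesis \eqref{dimsstar58}, which states precisely that $\sum_{n}\mathcal{L}(F_n)<\infty$. By the (convergence half of the) Borel--Cantelli lemma, the Lebesgue measure of the limsup set of the $F_n$ is zero, i.e.
\[
\mathcal{L}\!\left(\limsup_{n\to\infty}F_n\right)=\mathcal{L}\big(W_{\mathcal{A},\mathcal{B}}(\psi)\big)=0,
\]
which is exactly the desired conclusion.

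There is no real obstacle here: the only thing to be careful about is the bookkeeping in translating Lemma~\ref{lemLebesgue}, which is stated in terms of a parameter $\delta$, into the present statement indexed by $n$ through the substitution $\delta=\psi(n)^{1/2}$, and checking that the resulting two summands coincide term-by-term (up to a fixed constant) with the two summands appearing in \eqref{dimsstar58}. Since $\psi(n)\in(0,1)$ the logarithm $\log(1/\psi(n))$ is positive, so the substitution is legitimate and all quantities are well defined. Thus the proof is essentially immediate once Lemma~\ref{lemLebesgue} is in hand, and the argument is the exact Lebesgue-measure analogue of the Hausdorff-content argument used to establish Theorem~\ref{thm1HMeas}(ii).
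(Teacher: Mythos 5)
Your proposal is correct and follows exactly the paper's own proof: apply Lemma \ref{lemLebesgue} with $\delta^2=\psi(n)$ to bound $\mathcal{L}(F_n)$ by the summand in \eqref{dimsstar58} (up to an absolute constant), and conclude by the convergence half of the Borel--Cantelli lemma.
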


\begin{proof}
For $n\in \N$, let $F_n=\{x\in [0,1]: \|a_nx\|\|b_nx\|<\psi(n)\}$. Then  $W_{\mathcal{A},\mathcal{B}}(\psi)=\limsup_{n\to\infty}F_n$. 
By Lemma \ref{lemLebesgue} (in which we take $\delta^2=\psi(n)$), there is an absolute constant $C_3$ such that 
\begin{equation*}
\mathcal{L}(F_n)\leq C_3\left[{\rm gcd}(a_n,b_n)\left(\frac{\psi(n)}{a_nb_n}\right)^{\frac{1}{2}}+\psi(n)\log\left(\frac{1}{\psi(n)}\right)\right].	
\end{equation*}
The theorem then follows by the Borel-Cantelli lemma.
\end{proof}
 
Similar to the case for Hausdorff measure, we have divergence results for Lebesgue measure only in some restrictive circumstances. For instance, based on the above result and some known results on one dimensional approximation, we have the following.

\begin{pro}
Suppose that $\varliminf_{n\to\infty}\frac{\log \psi(n)^{-1}}{\log n}>1$. Then in each of the following two cases:
\begin{itemize}
\item[(i)] $\{g_n\}_{n\in\N}$ is lacunary,

\item[(ii)] $\{g_n\}_{n\in\N}\subseteq Q_{\mathcal{S}}$ is an increasing sequence with $Q_{\mathcal{S}}$ being defined in \eqref{eqDefQS}, 
\end{itemize}
we have 
\[\mathcal{L}(W_{\mathcal{A},\mathcal{B}}(\psi))=\begin{cases}0, & \text{ if }\sum_{n=1}^{\infty}g_n\left(\frac{\psi(n)}{a_nb_n}\right)^{\frac{1}{2}}<\infty,\\
1, & \text{ if }\sum_{n=1}^{\infty}g_n\left(\frac{\psi(n)}{a_nb_n}\right)^{\frac{1}{2}}=\infty.
\end{cases}\]
\end{pro}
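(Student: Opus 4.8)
The plan is to treat the measure-zero and measure-one halves separately, reducing the latter to a one-dimensional approximation problem via Lemma \ref{LemSubset}. The standing hypothesis $\varliminf_{n\to\infty}\frac{\log\psi(n)^{-1}}{\log n}>1$ will be used only to dispose of the logarithmic term in Theorem \ref{thmLebes}, whereas the arithmetic hypotheses (i) and (ii) on $\mathcal{G}=\{g_n\}_{n\in\N}$ (with $g_n={\rm gcd}(a_n,b_n)$) enter only in the divergence half.

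First I would extract from the hypothesis that $\sum_{n=1}^{\infty}\psi(n)\log(1/\psi(n))<\infty$. Indeed, there are $\epsilon>0$ and $N$ with $\psi(n)<n^{-(1+\epsilon)}$ for all $n\ge N$; since $t\mapsto t\log(1/t)$ is increasing on $(0,1/e)$ and $\psi(n)<1/e$ for large $n$, one gets $\psi(n)\log(1/\psi(n))\le (1+\epsilon)n^{-(1+\epsilon)}\log n$, and the latter is summable. With this in hand the convergence case is immediate: if $\sum_n g_n(\psi(n)/(a_nb_n))^{1/2}<\infty$, then the full series \eqref{dimsstar58} converges, so Theorem \ref{thmLebes} gives $\mathcal{L}(W_{\mathcal{A},\mathcal{B}}(\psi))=0$. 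This step makes no use of (i) or (ii).

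For the divergence case, assume $\sum_n g_n(\psi(n)/(a_nb_n))^{1/2}=\infty$ and set $\Psi(n)=g_n(\psi(n)/(a_nb_n))^{1/2}$. Applying Lemma \ref{LemSubset} with $\eta=(\psi(n)/(a_nb_n))^{1/2}$ (so that $\|a_nx\|<a_n\eta$ and $\|b_nx\|<b_n\eta$ force $\|a_nx\|\|b_nx\|<a_nb_n\eta^2=\psi(n)$) and taking limsup over $n$, I obtain
\[W_{\mathcal{A},\mathcal{B}}(\psi)\supseteq W_{\mathcal{G}}(\Psi),\qquad W_{\mathcal{G}}(\Psi)=\{x\in[0,1]:\|g_nx\|<\Psi(n)\text{ for i.m. }n\in\N\}.\]
Now I would invoke the non-monotone zero-one law \eqref{eq01Leb}: in case (i) it applies because $\mathcal{G}$ is lacunary (by \cite[Theorem 7.3]{Harman98}), and in case (ii) because $\mathcal{G}$ is an increasing sequence in $Q_{\mathcal{S}}$ (by \cite[Corollary 2]{PVZZ22I}). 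Since the governing series $\sum_n\Psi(n)=\sum_n g_n(\psi(n)/(a_nb_n))^{1/2}$ diverges, \eqref{eq01Leb} yields $\mathcal{L}(W_{\mathcal{G}}(\Psi))=1$, whence $\mathcal{L}(W_{\mathcal{A},\mathcal{B}}(\psi))=1$.

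No step here is deep. The point requiring the most care is the first one, namely checking that $\varliminf>1$ forces summability of $\psi(n)\log(1/\psi(n))$, since this is exactly what reduces \eqref{dimsstar58} to its gcd-term. The conceptual crux is the observation that Lemma \ref{LemSubset} converts the diagonal two-dimensional set into the one-dimensional set $W_{\mathcal{G}}(\Psi)$ whose governing series is precisely $\sum_n g_n(\psi(n)/(a_nb_n))^{1/2}$, so that hypotheses (i) and (ii) are exactly the conditions under which the one-dimensional dichotomy is available without assuming $\psi$ monotonic.
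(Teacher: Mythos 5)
Your proposal is correct and follows essentially the same route as the paper: the standing hypothesis disposes of the $\psi(n)\log(1/\psi(n))$ term so that Theorem \ref{thmLebes} gives the convergence half, and Lemma \ref{LemSubset} reduces the divergence half to the one-dimensional zero-one law \eqref{eq01Leb} for $\mathcal{G}$ via \cite[Theorem 7.3]{Harman98} in case (i) and \cite[Corollary 2]{PVZZ22I} in case (ii). The only difference is that you spell out the summability of $\psi(n)\log(1/\psi(n))$ explicitly, which the paper leaves as an assertion.
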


\begin{proof}
Notice that the assumption that $\varliminf_{n\to\infty}\frac{\log \psi(n)^{-1}}{\log n}>1$ guarantees that the series $\sum_{n=1}^{\infty}\psi(n)\log\left(\frac{1}{\psi(n)}\right)$ converges. Hence the series in \eqref{dimsstar58} has the same convergence/divergence property with $\sum_{n=1}^{\infty}g_n\left(\frac{\psi(n)}{a_nb_n}\right)^{\frac{1}{2}}$. Therefore the convergence part of the proposition follows from \eqref{dimsstar58}. As for the divergence part, observe that by Lemma \ref{LemSubset}, $W_{\mathcal{A},\mathcal{B}}(\psi)$ contains the set 
\[\left\{x\in [0,1]: \|g_nx\|<g_n\left(\frac{\psi(n)}{a_nb_n}\right)^{\frac{1}{2}}\text{ for i.m. }n\in\N\right\}.\]
Then the divergence part of the proposition  follows from \cite[Theorem 7.3]{Harman98}, \cite[Corollary 2]{PVZZ22I}, and \eqref{eq01Leb} (in which we let $\mathcal{A}$ be $\{g_n\}_{n\in\N}$ and $\psi(n)$ be $g_n\left(\frac{\psi(n)}{a_nb_n}\right)^{\frac{1}{2}}$).
\end{proof}

{\noindent \bf  Acknowledgements}. The authors would like to thank Lingmin Liao, Baowei Wang and Bo Wang for helpful comments and suggestions. B. Li was supported by NSFC12271176 and Guangdong Natural Science Foundation 2024A1515010946. R. F. Li was supported by NSFC12401006 and Guangdong Basic and Applied Basic Research Foundation 2023A1515110272. Y. F. Wu (corresponding author) was supported by NSFC12301110.

\end{document}